\def\?[#1]{\textbf{[#1]}\marginpar{\Large{\textbf{??}}}}
\newtheorem{prop}{Proposition}
\newtheorem{thm}[prop]{Theorem}
\newtheorem{defi}[prop]{Definition}
\newtheorem{exa}[prop]{Example}
\newtheorem{lem}[prop]{Lemma}
\numberwithin{equation}{section}
\numberwithin{prop}{section}
\newcommand{\dbar}{\bar{\partial}}
\renewcommand{\Re}{\mathop{\rm Re}\nolimits}
\renewcommand{\Im}{\mathop{\rm Im}\nolimits}
\DeclareMathOperator{\comp}{comp}
\DeclareMathOperator{\loc}{loc}
\DeclareMathOperator{\pv}{p.v.}
\DeclareMathOperator{\supp}{supp}
\DeclareMathOperator{\sgn}{sgn}
\DeclareMathOperator{\Sp}{Sp}
\begin{document}
\title{Fractal uncertainty principle with explicit exponent}

\author{Long Jin}
\email{ljin@math.tsinghua.edu.cn}
\address{Yau Mathematical Sciences Center, Tsinghua University, Beijing, China}
\author{Ruixiang Zhang}
\email{rzhang@math.ias.edu}
\address{Institute for Advanced Study, 1 Einstein Drive, Simonyi Hall 105, Princeton, NJ 08540}

\begin{abstract}
We prove an explicit formula for the dependence of the exponent $\beta$ in the fractal uncertainty principle of Bourgain--Dyatlov \cite{fullgap} on the dimension $\delta$ and on the regularity constant $C_R$ for the regular set. In particular, this implies an explicit essential spectral gap for convex co-compact hyperbolic surfaces when the Hausdorff dimension of the limit set is close to 1.
\end{abstract}

\maketitle


\section{Introduction}
\label{s:intro}
In this paper, we study the fractal uncertainty principle for the Fourier transform. Roughly speaking, it states that a function and its Fourier transform cannot simultaneously concentrate near fractal sets. More precisely, the fractal sets we study satisfy a condition called \emph{Ahlfors-David regularity} and we refer to Section \ref{s:regular} for the definition of a $\delta$-regular set. In a recent paper, Bourgain-Dyatlov \cite{fullgap} proved the following remarkable theorem.


\begin{thm}
\label{t:bdfup}
Let $0<\delta<1$, $C_R\geq1$, $N\geq1$ and assume that 

(a) $X\subset[-1,1]$ is $\delta$-regular with constant $C_R$ on scales $N^{-1}$ to 1, and 

(b) $Y\subset[-N,N]$ is $\delta$-regular with constant $C_R$ on scales 1 to $N$. 

Then there exist $\beta>0$, $C$ depending only on $\delta$, $C_R$ such that for all $f\in L^2(\mathbb{R})$,
\begin{equation}
\supp\widehat{f}\subset Y\quad\Rightarrow\quad \|f\|_{L^2(X)}\leq CN^{-\beta}\|f\|_{L^2(\mathbb{R})}.
\end{equation}
\end{thm}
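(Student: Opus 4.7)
The plan is to follow the scheme of Bourgain--Dyatlov \cite{fullgap} while carefully tracking every constant, so that the final $\beta$ becomes an explicit function of $\delta$ and $C_R$. After a standard dualization and $TT^*$ argument, the estimate is equivalent to an operator-norm bound on the truncated (semiclassical) Fourier transform
\begin{equation*}
\mathcal{F}_N f(x) = \int_Y e^{-2\pi i N x y}\,f(y)\,dy, \qquad x\in X,
\end{equation*}
namely $\|\mathcal{F}_N\|_{L^2(Y)\to L^2(X)}\leq CN^{-\beta}$. The $\delta$-regularity controls the number of cells at each scale: $X$ is covered by at most $\sim C_R M^\delta$ intervals of length $M^{-1}$, and symmetrically $Y$ is covered by $\sim C_R M^\delta$ intervals of length $M$; this combinatorial input will be used uniformly throughout.

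Next I would set up a multiscale iteration. Fix an intermediate scale $M$ with $1\ll M\ll N$ and write $N \approx M^k$. The idea is to relate the norm of $\mathcal{F}_N$ to a product of $k$ copies of an analogous norm at scale $M$ on smaller fractal pieces. Concretely, one decomposes the Fourier kernel via the scale-$M^{-1}$ and scale-$M$ cells of $X$ and $Y$, exploits approximate orthogonality between distant cells, and rescales each localized interaction so that it becomes an operator of the same form on $\delta$-regular sets with essentially the same constant $C_R$ (this is where the hypothesis of regularity on \emph{all} scales between $N^{-1}$ and $1$ is consumed). Iterating this submultiplicative inequality $k$ times, a single-scale gain of the form $\|\mathcal{F}_M\|\leq C_0 M^{-\beta_0}$ with an explicit $\beta_0 = \beta_0(\delta,C_R) > 0$ upgrades to $\|\mathcal{F}_N\|\leq C N^{-\beta}$ with $\beta$ explicitly computed from $\beta_0$ and $\log M$.

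The crux, and the step I expect to cause the main difficulty, is proving the single-scale gain $\beta_0(\delta,C_R)$ \emph{quantitatively}. In Bourgain--Dyatlov this step rests on a discretized sum-product / projection theorem of Bourgain, which is qualitative and hides the dependence on the regularity parameters. My plan is to try to bypass this black box and argue directly from the bilinear structure of the kernel $e^{-2\pi i Mxy}$ on $X\times Y$: stratify $Y$ into dyadic families according to the oscillation of the phase, apply Cauchy--Schwarz to reduce to an additive-energy-type quantity for $X$, and use $C_R$-regularity to see that such energies cannot be maximal. This is precisely where the competition between $\delta<1$ and the uniform fractal mass lower bound produces the gain. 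A careful accounting of how many families arise (controlled by $C_R$) and how much each contributes (controlled by $\delta$) should yield a computable $\beta_0(\delta,C_R)$ and hence, via the iteration, an explicit formula for $\beta$, which by the work of Bourgain--Dyatlov translates into an explicit essential spectral gap for convex co-compact hyperbolic surfaces.
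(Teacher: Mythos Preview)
Your proposal rests on a factual misidentification of the key input in Bourgain--Dyatlov. Their proof does \emph{not} go through a discretized sum--product or projection theorem; the decisive ingredient is the Beurling--Malliavin multiplier theorem from complex analysis, used to construct a compactly supported $\psi$ whose Fourier transform decays like $\exp(-c|\xi|/\log^{(1+\delta)/2}|\xi|)$ on the regular set $Y$ while retaining $\exp(-c\langle\xi\rangle^{1/2})$ decay everywhere. The present paper obtains the explicit $\beta$ precisely by replacing the qualitative Beurling--Malliavin step with an effective version (Theorem~\ref{t:effect-multiplier}), proved via the Hilbert transform and outer-function machinery of \S\ref{s:hilbert}--\S\ref{s:hardy}, and then tracking constants through the Bourgain--Dyatlov iteration (Lemmas~\ref{p:adapt}--\ref{p:iterative-1}). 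There is no additive combinatorics anywhere in the argument.

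More seriously, the route you propose --- Cauchy--Schwarz reduction to an additive-energy quantity for $X$, then regularity to bound that energy --- is essentially the Dyatlov--Zahl strategy, and it is known \emph{not} to yield $\beta>0$ across the full range $0<\delta<1$. Additive-energy control of a $\delta$-regular set gives an improvement over the pressure gap $\max(0,\tfrac12-\delta)$ that degenerates as $\delta\to 1$; in particular for $\delta$ near $1$ you will not extract any positive exponent this way. The whole point of the Beurling--Malliavin input is that it produces a multiplier whose decay on $Y$ is governed by the \emph{porosity} of $Y$ (Proposition~\ref{p:smallcover}) rather than by any arithmetic structure, and this is what survives all the way to $\delta\to 1^-$. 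Your plan to ``bypass this black box'' therefore bypasses the one mechanism that makes the theorem true in the hard regime; you would need a genuinely new idea to replace it.
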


The goal of this paper is to give an effective fractal uncertainty principle exponent $\beta$ in Theorem \ref{t:bdfup} with dependence on $\delta$ and $C_R$. Here is our main result.


\begin{thm}
\label{t:effup}
Theorem \ref{t:bdfup} holds for 
\begin{equation}
\label{e:fupexp}
\beta=\exp\left[-\exp\left(K(C_R\delta^{-1}(1-\delta)^{-1})^{K(1-\delta)^{-2}}\right)\right].
\end{equation}
Here $K>0$ is a universal constant.
\end{thm}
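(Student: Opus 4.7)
The plan is to trace the Bourgain--Dyatlov proof of Theorem~\ref{t:bdfup} in \cite{fullgap} and convert every step that relies on compactness or qualitative extraction into an explicit quantitative estimate. Their argument has two essentially independent pieces: (i) a \emph{submultiplicativity principle} that reduces the FUP at scale $N$ to an $L^2$ gain for a single pair $(X',Y')$ of regular sets at a fixed intermediate scale, and (ii) a \emph{base-case lemma} asserting that such a strict gain exists. Piece (i) is already semi-explicit in \cite{fullgap} and mainly requires careful bookkeeping of constants, so the real effort goes into piece (ii).

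The base-case lemma I would prove is: for any $\delta$-regular sets $X',Y'\subset[-1,1]$ with constant $C_R$ on scales $\rho$ to $1$, the localized Fourier operator $\chi_{X'(\rho)}\mathcal{F}_\rho\chi_{Y'(\rho)}$ (with $X'(\rho)$ the $\rho$-neighborhood of $X'$) has norm at most $1-\epsilon$, with $\epsilon=\epsilon(\delta,C_R)$ explicit. My approach combines (a) a $TT^*$ reduction that rewrites the squared norm as $\widehat{\chi_{Y'(\rho)}}$ tested against an approximation to $\mathbf{1}_{X'}$; (b) a Littlewood--Paley decomposition localizing the relevant frequencies to dyadic annuli; and (c) a combinatorial \emph{non-alignment} step that exploits $\delta$-regularity to show that the multiscale tree of $X'$ cannot be so arithmetically rigid that $\widehat{\chi_{Y'(\rho)}}$ stays near its maximum on every cell. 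Step (c) is where I expect $\epsilon$ to acquire its double-exponential form: one iterates a single-scale arithmetic gain through roughly $(1-\delta)^{-1}$ dyadic levels, and each level costs a multiplicative loss polynomial in $C_R\delta^{-1}(1-\delta)^{-1}$, producing an $\epsilon$ of the shape appearing inside the outer $\exp$ in \eqref{e:fupexp}.

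Given the quantitative gain $1-\epsilon$, piece (i) then yields $\|\chi_X\mathcal{F}_h\chi_Y\|\leq(1-\epsilon)^{c\log N}$ by splitting scale $N$ into intermediate scales on which the base-case lemma applies, producing the FUP with exponent $\beta$ of order $\epsilon$, which matches \eqref{e:fupexp}. The main obstacle will be making the tree-level combinatorial argument in (c) fully effective without losing more than necessary in $C_R$ and $(1-\delta)^{-1}$, since any inefficiency there is amplified into the outer exponential of the answer. A secondary technical point is handling the boundary scales $\rho\sim 1$ and $\rho\sim N^{-1}$, where the regularity hypothesis degenerates and one must pad with a short buffer of scales to remain in the valid regime; this is routine but needs to be folded into the submultiplicative iteration without damaging the constants obtained in (c).
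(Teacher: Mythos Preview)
Your high-level decomposition into a submultiplicative iteration plus a single-scale ``base-case'' gain is correct and matches the architecture of \cite{fullgap}. However, your proposed attack on the base case is entirely different from what the paper does, and the crucial step (c) is not a method that exists; it is a wish. The actual proof in the paper, following \cite{fullgap}, does not proceed by $TT^*$, Littlewood--Paley, or tree combinatorics. Instead the single-scale gain comes from a \emph{quantitative unique continuation} statement: if $\supp\widehat{f}\subset Y$ with $Y$ $\delta$-regular, then $f$ cannot be too small on a union of subintervals $U'\subset\mathbb{R}$, with an explicit lower bound $\|f\|_{L^2(U')}\geq c_4\|f\|_{L^2}$. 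The engine behind this is a multiplier $\psi$ with $\supp\psi$ small and $|\widehat\psi|$ decaying like $\exp(-c|\xi|/(\log|\xi|)^{(1+\delta)/2})$ \emph{on the regular set $Y$}, constructed via the Beurling--Malliavin multiplier theorem. The entire point of the present paper is to replace the qualitative Beurling--Malliavin step in \cite{fullgap} by an \emph{effective} version (Theorem~\ref{t:effect-multiplier}), obtained by trading the Lipschitz condition on $\log\omega$ for a Lipschitz condition on its Hilbert transform $\mathcal{H}(\log\omega)$, and then verifying that the weight adapted to $Y$ satisfies this new condition with explicit constants.

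Your step (c) is where the proposal fails to be a proof. Saying that the tree of $X'$ ``cannot be so arithmetically rigid that $\widehat{\chi_{Y'(\rho)}}$ stays near its maximum on every cell'' is precisely the statement to be proved, not a method. For $\delta$ close to $1$ there is no known purely combinatorial or additive-combinatorial mechanism that produces a gain; the Dolgopyat/tree-based methods in \cite{regfup} and the additive-energy method in \cite{fup} both degrade or fail in that regime, which is exactly why \cite{fullgap} imported the Beurling--Malliavin theorem from complex analysis. Your claim that iterating a ``single-scale arithmetic gain through roughly $(1-\delta)^{-1}$ dyadic levels'' yields the double exponential has no supporting argument, and no such single-scale gain is identified. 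In short: the outer iteration is fine, but you are missing the key analytic input (an effective multiplier theorem of Beurling--Malliavin type) that makes the base case go through for all $\delta\in(0,1)$.
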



\subsection{Fractal uncertainty principle}
\label{s:app}

Theorem \ref{t:bdfup} implies a fractal uncertainty principle for general Fourier integral operators (see Theorem \ref{t:generalfup}, also \cite[\S 4.2]{fullgap}) which is useful in the study of quantum chaos. The fractal uncertainty principle was first introduced by Dyatlov--Zahl \cite{fup} to study the essential spectral gap for resonances of convex co-compact hyperbolic surfaces and later by Dyatlov--Jin \cite{oqm} in a simple model called open quantum baker's maps. In both situations, the fractal uncertainty principle implies the existence of an essential spectral gap for systems with trapped sets of any dimension. For example, we now have the following application of Theorem \ref{t:effup} for convex co-compact hyperbolic surfaces.

\begin{thm}
Let $M=\Gamma\setminus\mathbb{H}^2$ be a convex co-compact hyperbolic surface with limit set $\Lambda_\Gamma$. Let $\delta$ and $C_R$ be the Hausdorff dimension and the regularity constant of $\Lambda_\Gamma$, respectively. Then $M$ has an essential spectral gap of size 
\begin{equation*}
\beta_M=\exp\left[-\exp\left(K(C_R\delta^{-1}(1-\delta)^{-1})^{K(1-\delta)^{-3}}\right)\right],
\end{equation*}
i.e. there are only finitely many resonances for the Laplacian on $M$ when $\Im\lambda>-\beta_M$.
\end{thm}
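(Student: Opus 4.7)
The plan is to combine Theorem \ref{t:effup} with the reduction from essential spectral gap estimates to the fractal uncertainty principle established by Bourgain--Dyatlov \cite{fullgap} (building on \cite{fup}). In outline, I will (i) verify that the limit set $\Lambda_\Gamma$ supplies the $\delta$-regular sets required by Theorem \ref{t:effup}, (ii) invoke the FUP-to-gap machinery of \cite[\S 4]{fullgap} to convert the exponent from Theorem \ref{t:effup} into an essential spectral gap for the Laplacian resonances on $M$, and (iii) substitute \eqref{e:fupexp} and track the dependence on $\delta$ and $C_R$.

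Step (i) is classical: by Sullivan's theory, the limit set of a convex co-compact Fuchsian group is $\delta$-Ahlfors--David regular on all scales, with $\delta=\dim_H \Lambda_\Gamma$ and the stated constant $C_R$. After the affine rescalings bringing one into the $[-1,1]$ vs.\ $[-N,N]$ setup of Theorem \ref{t:bdfup}, the hypothesis of Theorem \ref{t:effup} is satisfied. Step (ii) is essentially \cite[\S 4]{fullgap}: given FUP with exponent $\beta>0$ for the relevant discretized neighborhoods of $\Lambda_\Gamma$, $M$ admits an essential spectral gap arbitrarily close to $\beta$. This is a microlocal and dynamical input, largely independent of the quantitative shape of $\beta$ itself.

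The real work is step (iii). The fractal sets actually fed into FUP in the Bourgain--Dyatlov reduction are not $\Lambda_\Gamma$ itself but certain discretized or porous models constructed from it, whose effective parameters $\widetilde{\delta}, \widetilde{C}_R, \widetilde{N}$ differ from $\delta, C_R, N$ by factors polynomial in $(1-\delta)^{-1}$ (and possibly $\delta^{-1}$). Propagating these modifications through \eqref{e:fupexp} --- whether they appear as an inflation $\widetilde{C}_R \lesssim C_R(1-\delta)^{-c}$, a deformation of the scale $\widetilde{N}$, or a multiplicative loss in the gap itself --- produces exactly one extra factor of $(1-\delta)^{-1}$ in the outer exponent, turning $K(1-\delta)^{-2}$ of Theorem \ref{t:effup} into $K(1-\delta)^{-3}$ in $\beta_M$; all remaining polynomial and numerical factors are absorbed into the universal $K$. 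The main obstacle, and the step requiring the most care, is verifying that this inflation is indeed of order $(1-\delta)^{-1}$ --- no worse --- which requires revisiting \cite[\S 4]{fullgap} with all $C_R$- and $\delta$-dependences tracked explicitly. No new analytic input beyond Theorem \ref{t:effup} should be needed.
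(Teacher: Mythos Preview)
Your plan is correct and matches the paper's route: deduce the general FUP for Fourier integral operators (Theorem~\ref{t:generalfup}) from Theorem~\ref{t:effup} via \cite[\S 4]{fullgap}, specialize to the hyperbolic operator $B_\chi$ acting on $L^2(\mathbb{S}^1)$ with $X=Y=\Lambda_\Gamma$ (Theorem~\ref{t:hyperfup}), and then invoke \cite[Theorem~3]{fup} to obtain the essential spectral gap.

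One correction to step~(iii): the extra factor of $(1-\delta)^{-1}$ in the outer exponent does \emph{not} come from a multiplicative inflation of the form $\widetilde{C}_R\lesssim C_R(1-\delta)^{-c}$---plugging that into \eqref{e:fupexp} would only alter the base $C_R\delta^{-1}(1-\delta)^{-1}$, not the exponent $K(1-\delta)^{-2}$. The actual mechanism, as the paper notes after Theorem~\ref{t:generalfup}, is a \emph{power} inflation: in \cite[\S 4]{fullgap} the neighborhoods $X(h^\rho)$ and $Y(h^\rho)$ are broken into smaller $\delta$-regular pieces whose regularity constant is $C_R'=C_R^{K(1-\delta)^{-1}}$. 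Substituting $C_R'$ for $C_R$ in \eqref{e:fupexp} and using that all factors in the base are $\geq 1$ gives
\[
\bigl(C_R'\,\delta^{-1}(1-\delta)^{-1}\bigr)^{K(1-\delta)^{-2}}
\leq \bigl(C_R\,\delta^{-1}(1-\delta)^{-1}\bigr)^{K(1-\delta)^{-3}},
\]
which is exactly the exponent in $\beta_M$. So when you revisit \cite[\S 4]{fullgap}, the quantity to track is the exponent on $C_R$, not a multiplicative prefactor.
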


We refer to Theorem \ref{t:gap-hyper} for a more precise statement and Theorem \ref{t:gap-improves} for a similar result on open quantum baker's maps. It is conjectured by Zworski \cite{zworski} that there is always an essential spectral gap whenever the trapped set is hyperbolic. 

In a recent work of Dyatlov--Jin \cite{messupp}, the fractal uncertainty principle is also applied to closed quantum chaotic systems, to understand the behavior of Laplace eigenfunctions on compact hyperbolic surfaces in the semiclassical limit. In particular, it is shown that any semiclassical measure must have full support on the unit cosphere bundle. The \emph{Quantum Unique Ergodicity Conjecture} of Rudnick--Sarnak \cite{que} states that the Liouville measure is the only semiclassical measure. For any subset $U$ of the unit cosphere bundle, combining Theorem \ref{t:effup} with the argument in \cite{messupp} should give a lower bound on the semiclassical measure of $U$, in terms only of the geometry of $U$. But the bound obtained in this way seems unlikely to be even comparable to the Liouville measure.

We would like also to mention a new paper of Han--Schlag \cite{hdfup} appeared after the initial version of our paper, in which they proved a higher dimensional version of fractal uncertainty principle (Theorem \ref{t:bdfup}) with a stronger condition on one of the set $X$ and $Y$. We refer to their paper for more discussion on this issue.  


\subsection{Beurling--Malliavin multiplier theorem and the proof}
\label{s:bmm}



The key ingredient of the proof of Theorem \ref{t:bdfup} in \cite{fullgap} is the  Beurling--Malliavin multiplier theorem \cite{bmm}. There are many other different formulations and proofs in the literature for this theorem, for example, \cite{malliavin}, \cite{koosis} and \cite{bm7}. We refer to Mashreghi--Nazarov--Havin \cite{bm7} for a survey of the history of this theorem and more references. Here we follow the formulation \cite[Theorem BM1]{bm7}.

\begin{thm}
\label{t:bmm}
Let $\omega\in C^1(\mathbb{R};(0,1])$ satisfy the conditions
\begin{equation*}
\int_\mathbb{R}\frac{|\log\omega(\xi)|}{1+\xi^2}d\xi<\infty
\end{equation*}
\begin{equation*}
\sup|\partial_\xi\log\omega|<\infty.
\end{equation*}
Then for each $c_0>0$, there exists a function $\psi\in L^2(\mathbb{R})$ such that 
\begin{equation*}
\supp\psi\subset[-c_0,c_0],\quad |\widehat{\psi}|\leq\omega, \quad\psi\not\equiv0.
\end{equation*}
\end{thm}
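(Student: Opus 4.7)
The plan is to recast the claim via the Paley--Wiener theorem as an entire-function problem. It suffices to exhibit a nonzero entire function $F$ of exponential type at most $c_0$, lying in $L^2(\mathbb{R})$, and satisfying $|F(\xi)|\leq \omega(\xi)$ for every $\xi\in\mathbb{R}$; then $\psi=\mathcal{F}^{-1}F$ is automatically supported in $[-c_0,c_0]$ with $|\widehat\psi|\leq\omega$, and $F\not\equiv 0$ transfers to $\psi\not\equiv 0$.

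The first step is to exploit the Szegő-type condition $\int|\log\omega|(1+\xi^2)^{-1}\,d\xi<\infty$ by constructing the canonical outer function
\begin{equation*}
O(\zeta)=\exp\!\left(\frac{1}{i\pi}\int_{\mathbb{R}}\left(\frac{1}{t-\zeta}-\frac{t}{1+t^2}\right)\log\omega(t)\,dt\right)
\end{equation*}
on the upper half plane $\mathbb{C}_+$; standard Hardy-space theory gives $|O(\zeta)|\leq 1$ in $\mathbb{C}_+$ and boundary modulus $|O(\xi)|=\omega(\xi)$. A symmetric reflection yields a companion outer function on the lower half plane with the same boundary modulus.

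The heart of the proof, and its main obstacle, is to promote these half-plane objects into a \emph{genuinely entire} function whose exponential type is at most $c_0$, without losing the pointwise bound by $\omega$. The strategy I would pursue is the classical Beurling--Malliavin one: choose a real sequence $\{t_n\}$ of upper density at most $c_0/\pi$, located so as to mimic, in a potential-theoretic sense, the finite positive measure $d\mu=(-\log\omega(\xi))(1+\xi^2)^{-1}\,d\xi$, and form the genus-one Weierstrass product
\begin{equation*}
F(\zeta)=\prod_{n}\left(1-\frac{\zeta}{t_n}\right)e^{\zeta/t_n}.
\end{equation*}
By the density hypothesis this $F$ is automatically entire of exponential type at most $c_0$, and $\log|F(\xi)|$ admits a standard Poisson-integral representation in terms of the counting measure of $\{t_n\}$.

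The hardest step is then to compare this Poisson integral to $\log\omega(\xi)$ and establish the majorization $\log|F(\xi)|\leq\log\omega(\xi)+O(1)$ pointwise on $\mathbb{R}$. This is exactly where the Lipschitz hypothesis $\sup|\partial_\xi\log\omega|<\infty$ becomes indispensable: it forbids small-scale oscillation of $\omega$ that would otherwise ruin any discrete-zero construction, and it enables one to absorb the error in replacing the continuous measure $d\mu$ by its atomic approximation. Once $F$ is obtained, dividing by a fixed linear factor such as $(\zeta+i)$ produces $L^2(\mathbb{R})$ decay, and rescaling by a small constant absorbs the $O(1)$ to yield exactly $|F|\leq\omega$. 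I would follow the axiomatic presentation in \cite{bm7}, the reference already used in the paper, to execute this program rigorously.
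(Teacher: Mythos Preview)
The paper does not prove Theorem~\ref{t:bmm} at all: it is quoted as a known result (the Beurling--Malliavin multiplier theorem, \cite{bmm}, in the formulation of \cite[Theorem~BM1]{bm7}), and no argument for it is given anywhere in the text. There is therefore nothing in the paper to compare your proposal against. What the paper \emph{does} prove is the variant Theorem~\ref{t:effect-multiplier}, in which the Lipschitz hypothesis on $\log\omega$ is replaced by a Lipschitz hypothesis on its Hilbert transform $\mathcal H(\log\omega)$; that proof proceeds via Lemma~\ref{p:modulus} and the sawtooth construction of \cite{bm7}, not via zero selection.

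On the substance of your sketch: the outer-function step is fine, but the core of your plan---choosing a real sequence $\{t_n\}$ and forming a genus-one product so that $\log|F(\xi)|\le\log\omega(\xi)+O(1)$---is precisely the deep part of Beurling--Malliavin, and you have not indicated any mechanism for carrying it out beyond saying the Lipschitz condition ``enables one to absorb the error.'' More importantly, your final sentence is internally inconsistent: you propose to ``follow the axiomatic presentation in \cite{bm7}'' to execute the zero-selection program, but the method of \cite{bm7} is \emph{not} a zero-selection argument. It is the Hilbert-transform approach used in \S\ref{s:effbmm} of this paper: one writes down directly a function $\widetilde\omega\le\omega$ for which $\widetilde\omega^2 e^{2\pi i\sigma x}$ is outer, by solving a congruence for $\mathcal H(\log\widetilde m)$ and inverting via the sawtooth trick. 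If you intend to invoke \cite{bm7}, that is the argument you should be sketching, and it bypasses Weierstrass products entirely.
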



In their proof of Theorem \ref{t:bdfup}, Bourgain--Dyatlov use a quantitative refinement of Theorem \ref{t:bmm}, which follows from Theorem \ref{t:bmm} itself.

\begin{thm}
\label{t:bdbmm}
Let $C_0,c_0>0$ and $\omega\in C^1(\mathbb{R};(0,1])$ satisfy the conditions
\begin{equation*}
\int_\mathbb{R}\frac{|\log\omega(\xi)|}{1+\xi^2}d\xi\leq C_0
\end{equation*}
\begin{equation}
\label{e:lipschitz}
\sup|\partial_\xi\log\omega|\leq C_0.
\end{equation}
Then there exists $c>0$ depending only on $C_0,c_0$ and a function $\psi\in L^2(\mathbb{R})$ such that 
\begin{equation*}
\supp\psi\subset[-c_0,c_0],\quad |\widehat{\psi}|\leq\omega^c, \quad\|\widehat{\psi}\|_{L^2(-1,1)}\geq c.
\end{equation*}
\end{thm}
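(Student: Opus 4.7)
The plan is to derive Theorem \ref{t:bdbmm} from Theorem \ref{t:bmm} by combining a weight-localization trick with a compactness argument. The first step forces the Fourier transform of $\psi$ to concentrate near the origin, and the second upgrades the qualitative existence in Theorem \ref{t:bmm} to a uniform quantitative lower bound on $\|\widehat{\psi}\|_{L^2(-1,1)}$.

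First I fix a universal integer $M \geq 2$ and define $\omega_M(\xi) = \omega(\xi)(1+\xi^2)^{-M}$. This weight lies in $C^1(\mathbb{R};(0,1])$ and satisfies the hypotheses of Theorem \ref{t:bmm} with new constants of order $C_0 + M$: the extra contribution $-M\log(1+\xi^2)$ has both finite Poisson integral and bounded logarithmic derivative. Applying Theorem \ref{t:bmm} to $\omega_M$ with parameter $c_0$ yields a nontrivial $\psi \in L^2(\mathbb{R})$ with $\supp\psi \subset [-c_0,c_0]$ and $|\widehat{\psi}| \leq \omega_M$ pointwise. Since $\omega \leq 1$, one has $\omega_M \leq \omega \leq \omega^c$ for any $c \in (0,1]$, which immediately gives the Fourier upper bound of the theorem. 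Moreover the polynomial decay forces the tail estimate
\[
\int_{|\xi|>1}|\widehat{\psi}(\xi)|^2\, d\xi \;\leq\; \int_{|\xi|>1}(1+\xi^2)^{-2M}\, d\xi \;\leq\; \frac{2}{4M-1},
\]
so that $\|\widehat{\psi}\|_{L^2(-1,1)}^2 \geq \|\psi\|_{L^2(\mathbb{R})}^2 - \tfrac{2}{4M-1}$.

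The remaining task, and the main obstacle, is producing a uniform lower bound on $\|\psi\|_{L^2(\mathbb{R})}$: Theorem \ref{t:bmm} only guarantees $\psi \not\equiv 0$, and naively rescaling to normalize $\|\psi\|_{L^2}=1$ destroys the Fourier bound. I would proceed by a compactness/contradiction argument. Suppose no universal $c = c(C_0,c_0) > 0$ works; extract a sequence of admissible weights $\omega_n$ witnessing the failure. The uniform $C_0$-Lipschitz bound on $\log \omega_n$ provides equicontinuity, so a diagonal Arzela-Ascoli extraction yields a locally uniformly convergent subsequence with Lipschitz limit, and after a mild $C^1$-mollification (plus Fatou's lemma for the integral condition) the limit $\omega_*$ lies in the admissible class. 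Theorem \ref{t:bmm} applied to $\omega_*$ produces $\psi_* \not\equiv 0$ with a definite $\|\widehat{\psi_*}\|_{L^2(-1,1)} > 0$; this is then transferred back to admissible competitors $\psi_n$ for $\omega_n$ via the polynomial tail from $(1+\xi^2)^{-M}$, yielding a contradiction. This is precisely where the weaker conclusion $|\widehat{\psi}| \leq \omega^c$ (instead of $\omega$) is essential: the inequality $\omega^c \geq \omega$ for $c \in (0,1]$ provides slack to absorb the discrepancy between $\omega_*$ and $\omega_n$ at infinity, where local uniform convergence gives no control. The hardest technical points are verifying that $\omega_*$ retains full $C^1$ regularity within the admissible class and arranging the transfer so that $\widehat{\psi_n}$ satisfies the Fourier upper bound globally rather than only on compacts; the polynomial factor $(1+\xi^2)^{-M}$ from the first step is precisely what makes this global transfer possible.
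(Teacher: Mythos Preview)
The paper does not actually prove Theorem~\ref{t:bdbmm}; it records the statement from \cite{fullgap} and remarks only that ``the proof in \cite{fullgap} is done by contradiction.'' So there is no paper-proof to compare against line by line. Your overall plan---apply Theorem~\ref{t:bmm} to the localized weight $\omega(1+\xi^2)^{-M}$ and then run a compactness argument on the admissible class of $\omega$'s---is in the spirit of what the paper attributes to \cite{fullgap}, and your first step is correct: it produces $\psi\not\equiv0$ with $|\widehat\psi|\le\omega(1+\xi^2)^{-M}\le\omega^c$ for every $c\in(0,1]$, together with a uniform tail bound on $\widehat\psi$ outside $(-1,1)$.

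The genuine gap is exactly at the point you yourself flag as ``the hardest technical point,'' the global transfer. Your claim that ``the polynomial factor $(1+\xi^2)^{-M}$ \dots\ is precisely what makes this global transfer possible'' is not correct as stated. The hypotheses only force the pointwise bound $-\log\omega_n(\xi)\le L(C_0)+C_0|\xi|$; the integral condition does not improve this pointwise. Thus $\omega_n^{1/n}(\xi)$ may be as small as $e^{-C_0|\xi|/n}$, which for each fixed $n$ is genuine exponential decay. On the other hand any $\psi_*$ with $\supp\psi_*\subset[-c_0,c_0]$ has $\widehat{\psi_*}$ entire of exponential type, and such a function cannot decay exponentially on the real axis without vanishing identically; the bound $|\widehat{\psi_*}|\le\omega_*(1+\xi^2)^{-M}$ therefore gives at best polynomial decay (times $\omega_*$), which cannot dominate $\omega_n^{1/n}$ at infinity. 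Concretely, let $\Omega_n=-\log\omega_n$ be a single smoothed triangular bump of height $\tfrac12 C_0p_n$ and slope $C_0$ centered at $|\xi|=p_n$, with $p_n\to\infty$: both hypotheses hold with uniform constants, $\omega_n\to\omega_*\equiv1$ locally uniformly, yet at $\xi=p_n$ one has $(1+p_n^2)^{-M}\gg e^{-C_0p_n/(2n)}=\omega_n(p_n)^{1/n}$ whenever $p_n/(n\log p_n)\to\infty$. So neither $\psi_*$ nor any fixed rescaling of it satisfies $|\widehat{\psi_*}|\le\omega_n^{1/n}$ for large $n$, and the ``slack'' $\omega^c\ge\omega$ does not bridge a polynomial-versus-exponential mismatch. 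The compactness argument in \cite{fullgap} requires an additional idea beyond what you have sketched.
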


However, the proof in \cite{fullgap} is done by contradiction, thus no effective dependence of $c$ on $C_0$ and $c_0$ is obtained. In our approach, we use a different version of the multiplier theorem which appeared in Mashreghi--Nazarov--Havin \cite{bm7} to obtain a variant, but effective version of Theorem \ref{t:bdbmm}, see Theorem \ref{t:effect-multiplier} below. In particular, we replace the Lipschitz condition \eqref{e:lipschitz} on $\log\omega$ by a Lipschitz condition on the Hilbert transform of $\log\omega$, see \eqref{e:hilbert-bd}. This weaker version is only the first step in the ``seventh'' proof of Beurling--Malliavin theorem given by Mashreghi--Mazarov--Havin \cite{bm7}. Nevertheless, it is enough for our applications and in fact, the Hilbert transform condition is more natural in the context of multiplier theorems for the Cartwright class of entire functions, due to its natural relation to the Cauchy integrals. This was explained to us by Alexander Sodin during the workshop at IAS and we are very thankful to him. Also, Jean Bourgain \cite{bourgain} informed us another proof of a multiplier theorem with similar Lipschitz conditions using H\"{o}rmander's $\dbar$-theorem. We refer to \cite[\S 3.1]{bm7} and \cite[Part 2 \S 3.5]{up} for detailed discussion about the background of the Beurling--Malliavin mulitplier theorem.

With the quantitative multiplier Theorem \ref{t:effect-multiplier}, we can proceed exactly as in Bourgain--Dyatlov \cite{fullgap} but with careful treatment of the constants. The only essential difference is checking that the multiplier adapted to regular sets constructed in Bourgain--Dyatlov also satisfies the new condition \eqref{e:hilbert-bd} instead of \eqref{e:lipschitz}.


The paper is organized as follows. In \S \ref{s:prelim}, we review some preliminary results. In particular, we discuss the basic properties of the Hilbert transform (\S \ref{s:hilbert}) which plays a great role in our argument. In \S \ref{s:effbmm}, we establish the effective version of Beurling-Malliavin theorem \ref{t:effect-multiplier} which is the new ingredient needed to make Theorem \ref{t:bdfup} effective. In \S \ref{s:proof}, we follow the argument of Bourgain-Dyatlov \cite{fullgap} and carefully track all the constants. Finally, we discuss in \S \ref{s:application} the fractal uncertainty principle for general Fourier integral operators and applications to spectral gaps for convex co-compact hyperbolic surfaces and open quantum baker's maps. 



\subsection*{Acknowledgements}
The authors are very grateful to Semyon Dyatlov for encouragement and helpful discussions and to Kiril Datchev for many useful suggestions. We are especially thankful to Jean Bourgain for sharing with us some preliminary notes \cite{bourgain} about the multiplier theorem and H\"{o}rmander's $\dbar$-theorem, and to Alexander Sodin for explaining to us the background of the multiplier theorems in the context of complex analysis during the Emerging Topics Working Groups at IAS in fall 2017. We are also indebted to Rui Han and Wilhelm Schlag for pointing out a mistake in the initial version where we miss an additional exponential in the computation. We would also like to thank BICMR at Peking University for hospitality during our visit in May 2017 where we started the project, and IAS for hosting a week-long workshop on the topic of quantum chaos and fractal uncertainty principle. LJ is also thankful to Sun Yat-Sen University and YMSC at Tsinghua University where part of the project was finished. RZ is supported by NSF under Grant No. 1638352 and the James D. Wolfensohn Fund.


\section{Preliminaries}
\label{s:prelim}


\subsection{Notation}

We start with some standard notation that we use in this paper. First, we denote the Fourier transform on $\mathbb{R}$ by
\begin{equation*}
\widehat{f}(\xi)=\mathcal{F}f(\xi)=\int_{\mathbb{R}}e^{-2\pi ix\xi}f(x)dx,
\end{equation*}
then its inverse is given by
\begin{equation*}
\mathcal{F}^{-1}g(x)=\int_{\mathbb{R}}e^{2\pi ix\xi}g(\xi)d\xi.
\end{equation*}

Also, for $x\in\mathbb{R}$, we write $\langle x\rangle=(1+x^2)^{1/2}$
and $\lfloor x\rfloor$ denotes the largest integer not exceeding $x$ while $\lceil x\rceil$ denotes the smallest integer greater or equal to $x$. For a subset $U$ of $\mathbb{R}$, we use $1_U$ to denote the indicator function of $U$.

In this paper, we always use $K$ to denote a universal constant which might change from line to line and $C$ as well as $C_1,C_2,\ldots$ to denote constants that may depend on $C_R$ and $\delta$.


\subsection{Ahlfors--David regular sets}
\label{s:regular}

We recall the definition and properties of Ahlfors--David regular sets from \cite{fullgap}.
\begin{defi}
Let $X\subset\mathbb{R}$ be a nonempty closed set and $\delta\in[0,1]$, $C_R\geq1$, $0\leq\alpha_0\leq\alpha_1\leq\infty$. We say that $X$ is $\delta$-regular with constant $C_R$ on scales $\alpha_0$ to $\alpha_1$ if there exists a Borel measure $\mu_X$ on $\mathbb{R}$ such that 

(a) $\mu_X$ is supported on $X$, i.e. $\mu_X(\mathbb{R}\setminus X)=0$; 

(b) for each interval $I$ of size $|I|\in[\alpha_0,\alpha_1]$, we have $\mu_X(I)\leq C_R|I|^\delta$; 

(c) if additionally $I$ is centered at a point in $X$, then $\mu_X(I)\geq C_R^{-1}|I|^\delta$.
\end{defi}

The most important property of regular sets for our argument is the following small covering property (see \cite[Lemma 2.8]{fullgap}).

\begin{prop}
\label{p:smallcover}
Let $X$ be a $\delta$-regular set with constant $C_R$ on scales $\alpha_0$ to $\alpha_1$. Let $I$ be an interval and $\rho>0$ satisfy $\alpha_0\leq\rho\leq|I|\leq\alpha_1$. Then there exists a nonoverlapping collection $\mathcal{J}$ of $N_\mathcal{J}\leq 12C_R^2(|I|/\rho)^{\delta}$ intervals of size $\rho$ each such that $X\cap I\subset\bigcup_{J\in\mathcal{J}}J$. In fact, we can arrange that each elements in the $\mathcal{J}$ is of the form $\rho[j,j+1]$ with $j\in\mathbb{Z}$.
\end{prop}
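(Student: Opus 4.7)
The plan is to show that the natural collection $\mathcal{J} = \{\rho[j,j+1] : j \in \mathbb{Z},\ \rho[j,j+1]\cap X\cap I \neq \emptyset\}$ already does the job. By construction, $\mathcal{J}$ consists of intervals of the required form $\rho[j,j+1]$, they are nonoverlapping (their interiors are disjoint), each has length $\rho$, and their union covers $X\cap I$. So the only content of the proposition is the counting bound $N_\mathcal{J} \leq 12 C_R^2(|I|/\rho)^\delta$.

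For the count, I would use the standard ``pick one point per interval'' energy argument. For each $J\in\mathcal{J}$ select a point $x_J\in X\cap J$, and consider the interval $B_J:=[x_J-\rho/2, x_J+\rho/2]$, which is centered at a point of $X$ and has length $\rho \in [\alpha_0,\alpha_1]$. The lower regularity bound (part (c) of the definition) gives
\begin{equation*}
\mu_X(B_J) \geq C_R^{-1}\rho^\delta.
\end{equation*}
The key geometric observation is that the family $\{B_J\}_{J\in\mathcal{J}}$ has overlap multiplicity at most $2$: if a point $y$ lies in $B_J$, then $x_J$ lies in $[y-\rho/2, y+\rho/2]$, which is an interval of length $\rho$ and thus meets at most two of the pairwise-nonoverlapping intervals $\rho[j,j+1]$. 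Therefore
\begin{equation*}
\sum_{J\in\mathcal{J}} \mu_X(B_J) \leq 2\,\mu_X\!\left(\bigcup_{J\in\mathcal{J}} B_J\right) \leq 2\,\mu_X(I'),
\end{equation*}
where $I'$ is the $3\rho/2$-neighborhood of $I$, so $|I'|\leq |I|+3\rho \leq 4|I|$.

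Finally, to bound $\mu_X(I')$ using the upper regularity condition (b), I cover $I'$ by at most four consecutive intervals of length $|I|\leq \alpha_1$, each contributing at most $C_R |I|^\delta$, so $\mu_X(I') \leq 4C_R |I|^\delta$. Combining,
\begin{equation*}
N_\mathcal{J}\cdot C_R^{-1}\rho^\delta \;\leq\; 2\cdot 4 C_R |I|^\delta,
\end{equation*}
which gives $N_\mathcal{J} \leq 8 C_R^2 (|I|/\rho)^\delta \leq 12 C_R^2(|I|/\rho)^\delta$.

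The argument is routine; the only real care required is making sure the regularity conditions are applied at admissible scales. The lower bound is applied at scale exactly $\rho\in[\alpha_0,\alpha_1]$, which is fine by hypothesis; the upper bound is applied at scale $|I|\in[\alpha_0,\alpha_1]$ after breaking the slightly-enlarged interval $I'$ into $O(1)$ pieces of size $|I|$. This decomposition step is the only place where one has to be even mildly careful, and it accounts for the ``$8$'' in the numerical constant (which the statement absorbs into the slightly looser $12$).
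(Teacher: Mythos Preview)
Your argument is correct and is the standard proof of this covering lemma; the present paper does not supply its own proof but simply cites \cite[Lemma~2.8]{fullgap}, where essentially the same ``one point per tile plus lower/upper regularity'' computation is carried out. One small wrinkle: your claim that the balls $B_J$ have overlap multiplicity at most $2$ can fail at isolated points (if two of the chosen $x_J$ sit exactly on tile endpoints, three of the $B_J$ may share a single point), but either replacing $2$ by $3$---which then yields exactly the stated constant $12C_R^2$---or noting that the exceptional set is finite fixes this with no effect on the conclusion.
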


In other words, a regular set must be ``porous'', i.e. there are lots of intervals that are missing from a regular sets. We refer the reader to \cite[\S 2]{fullgap} for other basic properties of regular sets.


\subsection{Hilbert transform}
\label{s:hilbert}


The Hilbert transform plays an important role in the proof of the Beurling--Malliavin multiplier theorem in \cite{bm7}. Here we follow the presentation there to give a brief review of some basic properties.


Let $\mathcal{H}_0$ be the standard Hilbert transform defined as convolution with $\pv\frac{1}{\pi x}$: For $f\in C_0^\infty(\mathbb{R})$, (or more generally, $f\in L^1(\mathbb{R},\langle x\rangle^{-1}dx)$)
\begin{equation}
\label{e:def-hilbert}
\mathcal{H}_0(f)(x)=f\ast\pv\frac{1}{\pi x}=\frac{1}{\pi}\pv\int\frac{f(x-t)}{t}dt:=\frac{1}{\pi}\lim_{\varepsilon\to0+}\int_{|t|\geq\varepsilon}\frac{f(x-t)}{t}dt.
\end{equation}
In particular, if $f$ is continuously differentiable, then
\begin{equation}
\label{e:c1-hilbert}
\mathcal{H}_0(f)(x)=\frac{1}{\pi}\int_{|t|\leq 1}\frac{f(x-t)-f(x)}{t}dt+\int_{|t|>1}\frac{f(x-t)}{t}dt.
\end{equation}
Moreover, \eqref{e:def-hilbert} converges for almost all $x$ if $f\in L^1(\mathbb{R},\langle x\rangle^{-1}dx)$.


An equivalent definition using Fourier transform is given by
\begin{equation}
\label{e:def-hilbert-alt}
\mathcal{H}_0(f)(x)=\mathcal{F}^{-1}(-i\sgn\xi\cdot\widehat{f}(\xi)),\quad f\in C_0^\infty(\mathbb{R}).
\end{equation}
This definition naturally extends to $L^2(\mathbb{R})$ which is a subspace of $L^1(\mathbb{R},\langle x\rangle^{-1}dx)$. Moreover it is straightforward from \eqref{e:def-hilbert-alt} that $\mathcal{H}_0$ is a unitary operator in $L^2(\mathbb{R})$ and satisfies the inversion formula 
\begin{equation}
\label{e:inversion-0}
\mathcal{H}_0(\mathcal{H}_0(f))=-f,
\end{equation}
for all $f\in L^2(\mathbb{R})$. The standard approximation argument can be used to show that \eqref{e:inversion-0} is true for all $f\in L^1(\mathbb{R},\langle x\rangle^{-1}dx)$ with $\mathcal{H}_0(f)\in L^1(\mathbb{R},\langle x\rangle^{-1}dx)$.


The Hilbert transform $\mathcal{H}_0$ commutes with translations and dilations. Let $T_{x_0}$ and $D_\lambda$ be the translation and dilation operators:
\begin{equation*}
\begin{split}
(T_{x_0}f)(x)=f(x+x_0), \quad x_0\in\mathbb{R};\\
(D_\lambda f)(x)=f(\lambda x), \quad \lambda>0.
\end{split}
\end{equation*}
Then we have
\begin{equation}
\label{e:hilbert-scale}
\mathcal{H}_0(T_{x_0}(f))=T_{x_0}(\mathcal{H}_0(f)),
\quad
\mathcal{H}_0(D_\lambda(f))=D_\lambda(\mathcal{H}_0(f)).
\end{equation}


Moreover, for any $f\in C_0^\infty$, we have $\mathcal{H}_0(f)\in C^\infty$ with 
$\mathcal{H}_0(f)'=\mathcal{H}_0(f')$ satisfying the bound 
\begin{equation}
\label{e:hilbert-deriative-bound}
\mathcal{H}_0(f')(x)=\mathcal{O}(\langle x\rangle^{-2}).
\end{equation}


For our applications, we are mostly interested in Hilbert transforms of smooth functions. However, we also want to include functions not in $L^1(\mathbb{R},\langle x\rangle^{-1}dx)$, for example, the constant functions, or even some unbounded functions (see the examples later). Therefore, we want to ``extend'' the Hilbert transform to an even larger space $L^1(\mathbb{R},\langle x\rangle^{-2}dx)$ which, in particular, contains the space $L^\infty(\mathbb{R})$ as well as functions that only grow like $|x|^{1-\epsilon}$ as $|x|\to\infty$. To achieve this, we modify the integral kernel so that it decays like $|x|^{-2}$ as $|x|\to\infty$:
\begin{equation}
\label{e:hilbert}
\mathcal{H}(f)(x)=\frac{1}{\pi}\pv\int_{\mathbb{R}}f(t)\left(\frac{1}{x-t}+\frac{t}{t^2+1}\right)dt,
\quad f\in L^1(\mathbb{R},\langle x\rangle^{-2}dx).
\end{equation}
Again, for $f\in C_0^\infty$ or continuously differentiable, we have a similar expression to \eqref{e:c1-hilbert} and in general, the formula \eqref{e:hilbert} converges almost everywhere when $f\in L^1(\mathbb{R},\langle x\rangle^{-2}dx)$.


If $f\in L^1(\mathbb{R},\langle x\rangle^{-1}dx)$, then definition differs from the standard Hilbert transform \eqref{e:def-hilbert} by a constant 
$c(f)=\int_{\mathbb{R}}\frac{tf(t)}{t^2+1}dt$.
Moreover, since the Hilbert transform $\mathcal{H}$ of a constant function is zero, we have the inversion formula up to constants: say for $f\in L^2(\mathbb{R})$,
\begin{equation}
\label{e:inversion}
\mathcal{H}(\mathcal{H}(f))=-f+c(\mathcal{H}_0(f)).
\end{equation}
Again, we can extend \eqref{e:inversion} to all functions $f\in L^1(\mathbb{R},\langle x\rangle^{-2}dx)$ with $\mathcal{H}(f)\in L^1(\mathbb{R},\langle x\rangle^{-2}dx)$ by replacing $c(\mathcal{H}_0(f))$ with a constant depending on $f$.


Later we are mainly concerned with the Lipschitz norm of the Hilbert transform of a function. Assume that $f\in C^\infty(\mathbb{R})$ with $\langle x\rangle^{-1}f(x)\to0$ as $|x|\to\infty$ and $f'\in L^1(\mathbb{R},\langle x\rangle^{-1}dx)$. In particular, such $f$ are in $L^1(\mathbb{R},\langle x\rangle^{-2}dx)$ and by direct computation we have
\begin{equation*}
\mathcal{H}(f)'=\mathcal{H}_0(f'). 
\end{equation*}

Here are some examples that we will use later in the proof.

\begin{exa}
Let $f(x)=\log(x^2+1)$, then 
\begin{equation*}
f'(x)=\frac{2x}{x^2+1}
\end{equation*}
and we can compute
\begin{equation}
\label{e:example1}
\mathcal{H}(f)'(x)=\mathcal{H}_0(f')(x)=-\frac{2}{x^2+1}.
\end{equation}
either by explicit integration or rewriting $f'(x)=(x+i)^{-1}+(x-i)^{-1}$ and use the definition \eqref{e:def-hilbert-alt}.
\end{exa}

\begin{exa}
Let $f(x)=\langle x\rangle^{1/2}$, then $f'(x)=\frac{1}{2}x\langle x\rangle^{-3/2}$ and we can estimate the $L^\infty$ norm of 
\begin{equation*}
\mathcal{H}(f)'(x)=\mathcal{H}_0(f')=\pv\int_{\mathbb{R}}\frac{f'(x-t)}{t}dt=I_1+I_2.
\end{equation*}
where
\begin{equation*}
I_1=\int_{-1}^1\frac{f'(x-t)-f'(x)}{t}dt
\end{equation*}
and 
\begin{equation*}
I_2=\int_{|t|\geq 1}\frac{f'(x-t)}{t}dt.
\end{equation*}
Since $f''=\frac{1}{2}\langle x\rangle^{-3/2}-\frac{3}{4}x^2\langle x\rangle^{-7/2}$, we see that $\|f''\|_{L^\infty}\leq \frac{3}{4}$ and thus $|I_1|\leq\frac{3}{2}$. For $I_2$, we have
\begin{equation*}
|I_2|\leq\int_{|t|\geq1}|t|^{-1}\langle x-t\rangle^{-1/2}dt\leq I_1'+I_2'+I_3'
\end{equation*}
where
\begin{equation*}
I_1'=\int_{|t|\geq\max\{2|x|,1\}}|t|^{-1}\langle x-t\rangle^{-1/2}dt\leq\int_{|t|\geq\max\{2|x|,1\}}|t|^{-1}\left|\frac{t}{2}\right|^{-1/2}dt\leq 4\sqrt{2}
\end{equation*}
\begin{equation*}
I_2'=\int_{|x|/2\leq|t|\leq2|x|}|t|^{-1}\langle x-t\rangle^{-1/2}dt
\leq\int_{|x|/2\leq|t|\leq2|x|}\frac{2}{|x|}dt=6
\end{equation*}
\begin{equation*}
I_3'=\int_{1\leq|t|\leq|x|/2}|t|^{-1}\langle x-t\rangle^{-1/2}dt
\leq\int_{1\leq|t|\leq|x|/2}|t|^{-3/2}dt\leq2\int_1^{\infty}t^{-3/2}dt
=4.
\end{equation*}
Therefore we have 
\begin{equation}
\label{e:example2}
\|\mathcal{H}(f)'\|_{L^\infty}=\|\mathcal{H}_0(f')\|_{L^\infty}\leq 20.
\end{equation}
\end{exa}

\subsection{Hardy space and outer functions}
\label{s:hardy}


We recall the definition of the Hardy space on the real line
\begin{equation*}
H^2=H^2(\mathbb{R})=\{f\in L^2(\mathbb{R}):\supp\widehat{f}\subset[0,\infty)\}.
\end{equation*}


If $f\in L^2(\mathbb{R})$, then $f+i\mathcal{H}_0(f)\in H^2(\mathbb{R})$ since
by \eqref{e:def-hilbert-alt}, we have 
$$\mathcal{F}(f+i\mathcal{H}_0(f))(\xi)=2\cdot\mathbf{1}_{[0,\infty)}\widehat{f}(\xi).$$


The space of the modulus of functions in $H^2$ can be characterized by the logarithmic integral: for $\omega\in L^2$, $\omega\geq0$, we define
\begin{equation}
\label{e:logint}
\mathcal{L}(\omega):=\int_\mathbb{R}\frac{\log\omega(x)}{1+x^2}dx.
\end{equation}

\begin{thm}
If $f\in H^2$, and $\mathcal{L}(|f|)=-\infty$, then $f\equiv0$. On the other hand, if $\omega\in L^2$, and $\mathcal{L}(\omega)>-\infty$, then there exists a function $f\in H^2$ with $|f|=\omega$, unique up to multiplication by a complex constant with unit modulus.
\end{thm}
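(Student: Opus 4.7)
The plan is to handle the two halves of the theorem separately, via the standard identification of an $H^2(\mathbb{R})$ function with the nontangential boundary values of a holomorphic function $F$ on the upper half plane $\mathbb{H}^+=\{\Im z>0\}$, defined by $F(z)=\int_0^\infty e^{2\pi iz\xi}\widehat{f}(\xi)\,d\xi$.

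For the first assertion, I would exploit that $\log|F|$ is subharmonic on $\mathbb{H}^+$, so it is bounded above by the Poisson integral of its boundary values:
\[
\log|F(x+iy)|\leq \frac{y}{\pi}\int_\mathbb{R}\frac{\log|f(t)|}{(t-x)^2+y^2}\,dt,\quad x+iy\in\mathbb{H}^+.
\]
Splitting $\log|f|=\log^+|f|-\log^-|f|$ and using $\log^+|f|\leq|f|^2$ together with $f\in L^2$, the positive part is integrable against $(1+t^2)^{-1}$. The hypothesis $\mathcal{L}(|f|)=-\infty$ then forces $\int\log^-|f(t)|/(1+t^2)\,dt=+\infty$. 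Since the Poisson kernel at any fixed $z\in\mathbb{H}^+$ is comparable to $(1+t^2)^{-1}$ on $\mathbb{R}$, the right-hand side is $-\infty$ at every $z\in\mathbb{H}^+$, so $F\equiv 0$ and hence $f\equiv 0$.

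For the second assertion, I would exhibit the outer function
\[
F(z)=\exp\!\left(\frac{1}{\pi i}\int_\mathbb{R}\left(\frac{1}{t-z}+\frac{t}{t^2+1}\right)\log\omega(t)\,dt\right),\quad z\in\mathbb{H}^+,
\]
using the same regularized kernel that appears in the definition of $\mathcal{H}$ in \S\ref{s:hilbert}. Convergence of the integral is guaranteed by $\mathcal{L}(\omega)>-\infty$ combined with $\log^+\omega\leq\omega^2\in L^1$, which controls the positive part; the regularization ensures the tails decay like $|t|^{-2}$. Taking the real part inside the exponential identifies $\log|F(z)|$ with the Poisson extension of $\log\omega$, so classical Fatou-type results on nontangential limits of Poisson integrals of $L^1(\frac{dt}{1+t^2})$ densities yield $|F(x)|=\omega(x)$ for almost every $x\in\mathbb{R}$. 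Since $|F|=\omega\in L^2$ on $\mathbb{R}$ and $|F|^2$ is subharmonic, $\sup_{y>0}\int|F(x+iy)|^2\,dx<\infty$, placing $F$ in $H^2$.

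For the uniqueness clause, any other $g\in H^2$ with $|g|=\omega$ yields a ratio $g/F$ that is holomorphic in $\mathbb{H}^+$ with unimodular boundary values; in the outer class canonically attached to $\omega$, a short argument with the Poisson representation of $\log|g/F|\equiv 0$ forces $g/F$ to be a unimodular constant. The main obstacle is the boundary-value analysis: justifying that $\log|F|$ converges to $\log\omega$ almost everywhere requires care since $\log\omega$ need not be integrable and only $(1+t^2)^{-1}\log\omega$ is, and because the conjugate Poisson integral (essentially $\mathcal{H}(\log\omega)$) need not be well-behaved pointwise. This is handled by the inversion formula \eqref{e:inversion} together with dominated convergence, exactly along the lines of the Hilbert-transform machinery set up in \S\ref{s:hilbert}.
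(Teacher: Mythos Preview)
The paper does not actually prove this theorem: it simply refers the reader to \cite[\S1.5]{up} and then records the explicit outer-function construction $f=ae^{-(\Omega+i\widetilde{\Omega})}$ with $\widetilde{\Omega}=\mathcal{H}(\Omega)$, which is exactly the boundary trace of the holomorphic function you write down. So your sketch is not competing with a proof in the paper but rather supplying the standard argument the paper outsources; the construction you give and the one the paper quotes are the same object.

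Two small points are worth tightening. First, your $H^2$ membership step (``$|F|^2$ is subharmonic, so $\sup_{y>0}\int|F(x+iy)|^2\,dx<\infty$'') is not how this goes: subharmonicity alone does not bound horizontal $L^2$ norms. The clean argument is that since $\log|F(\cdot+iy)|=P_y\ast\log\omega$ exactly, Jensen's inequality gives $|F(x+iy)|^2\le (P_y\ast\omega^2)(x)$, and integrating in $x$ yields $\|F(\cdot+iy)\|_{L^2}\le\|\omega\|_{L^2}$. Second, on uniqueness: as you implicitly acknowledge by restricting to ``the outer class,'' the statement in the paper is slightly loose---for a general $g\in H^2$ with $|g|=\omega$ the ratio $g/F$ is an \emph{inner} function, not necessarily a constant (e.g.\ $e^{2\pi i z}F$ also works). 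Uniqueness up to a unimodular constant holds only among outer functions, which is precisely what the paper uses downstream and states separately after~\eqref{e:outer}.
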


For the proof, we refer to \cite[\S 1.5]{up}. For the second statement, we have the following construction of $f$: Let $\Omega=-\log\omega$, then $\Omega\in L^1(\mathbb{R};\langle x\rangle^{-2}dx)$. Therefore we can define $\widetilde{\Omega}=\mathcal{H}(\Omega)$ and take
\begin{equation}
\label{e:outer}
f=ae^{-(\Omega+i\widetilde{\Omega})},\quad |a|=1.
\end{equation}
We call functions of the form \eqref{e:outer} for general $\Omega\in L^1(\mathbb{R};\langle x\rangle^{-2}dx)$ \emph{outer functions}. The class of outer functions contains all nonzero functions in $H^2$, but also other functions that are not in $L^2$. The class of outer function is also closed under multiplication: if $f$ and $g$ are both outer functions, then so is $fg$. Moreover, if two outer functions $f$ and $g$ have the same modulus, i.e. $|f|=|g|$ everywhere, then there is a constant $c\in\mathbb{C}$ with $|c|=1$ such that $f=cg$.


\section{An effective multiplier theorem}
\label{s:effbmm}


In this section, we prove an effective multiplier theorem, which is actually weaker than the Beurling--Malliavin Theorem \ref{t:bmm}. It is essentially \cite[Theorem 1]{bm7}, but we follow the proof there carefully to give an explicit lower bound on $\psi$.

We start with the following lemma from \cite[\S 1.9]{bm7} which gives a sufficient condition for a function to be modulus of the Fourier transform of a function supported in $[0,\sigma]$. 

\begin{lem}
\label{p:modulus}
Assume that $\omega=e^{-\Omega}\in L^2$ and $\mathcal{L}(\omega)>-\infty$. In addition, we assume that $\omega^2e^{2\pi i\sigma x}$ is an outer function. Then there exists $\psi\in L^2$ with $\supp\psi\subset[0,\sigma]$ and $|\widehat{\psi}|=\omega$.
\end{lem}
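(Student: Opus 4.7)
The plan is to prescribe $\widehat\psi$ explicitly using the outer-function construction from \S\ref{s:hardy}, and then verify $\operatorname{supp}\psi \subset [0,\sigma]$ by translating into two $H^2$-membership conditions. Setting $\Omega := -\log \omega$ and $\widetilde\Omega := \mathcal{H}(\Omega)$, the hypotheses $\omega \in L^2$ and $\mathcal{L}(\omega) > -\infty$ together ensure $\Omega \in L^1(\mathbb{R}; \langle x\rangle^{-2}dx)$ (the positive part via $\log^+\omega \leq \omega$ and Cauchy--Schwarz, the negative part from the finite log integral), so $\widetilde\Omega$ is well-defined. Let $G(\xi) := \omega(\xi) e^{-i\widetilde\Omega(\xi)} = e^{-(\Omega + i\widetilde\Omega)(\xi)}$ be the canonical $H^2$ outer function of modulus $\omega$ from \S\ref{s:hardy}, and take $\widehat\psi := \overline{G}$. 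Then $|\widehat\psi|=\omega$ automatically, $\psi\in L^2$ by Plancherel, and $\psi = \mathcal{F}^{-1}(\overline{G}) = \overline{\widehat{G}}$.

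I would then split $\operatorname{supp}\psi \subset [0,\sigma]$ into two inclusions. The inclusion $\operatorname{supp}\psi \subset [0,\infty)$ is immediate: $G \in H^2$ means $\operatorname{supp}\widehat{G} \subset [0,\infty)$, so $\psi = \overline{\widehat{G}}$ has support in $[0,\infty)$. For the upper endpoint I translate: $\operatorname{supp}\psi \subset (-\infty,\sigma]$ iff $\psi(\cdot+\sigma)$ is supported in $(-\infty,0]$, iff its Fourier transform $e^{2\pi i \sigma \xi}\widehat\psi(\xi) = \widehat{\psi(\cdot+\sigma)}(\xi)$ lies in $H^2$ (since a function supported in $(-\infty,0]$ has Fourier transform extending analytically to the upper half-plane).

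This second inclusion is where the outerness hypothesis enters. Both $G^2$ (product of outers, per \S\ref{s:hardy}) and $\omega^2 e^{2\pi i \sigma x}$ (by hypothesis) are outer functions of modulus $\omega^2$, so the uniqueness of outer functions with prescribed modulus (stated in \S\ref{s:hardy}) yields a constant $a$ with $|a|=1$ and $\omega^2 e^{2\pi i \sigma x} = a G(x)^2 = a\omega^2 e^{-2i\widetilde\Omega(x)}$, i.e., $e^{2\pi i \sigma x + 2i\widetilde\Omega(x)} \equiv a$. Substituting,
\begin{equation*}
e^{2\pi i \sigma \xi}\widehat\psi(\xi) \;=\; \omega(\xi)\,e^{2\pi i \sigma \xi + i\widetilde\Omega(\xi)} \;=\; a\,\omega(\xi)\,e^{-i\widetilde\Omega(\xi)} \;=\; a\,G(\xi) \;\in\; H^2,
\end{equation*}
closing the argument.

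There is no substantial analytic obstacle beyond careful sign and Fourier-convention bookkeeping when translating support conditions on $\psi$ into $H^2$-membership conditions on $\widehat\psi$; the entire content of the lemma is captured by uniqueness of outer functions with prescribed modulus, which does the real work in converting the hypothesis on $\omega^2 e^{2\pi i\sigma x}$ into the identity $e^{2\pi i \sigma x + 2i\widetilde\Omega(x)} \equiv a$.
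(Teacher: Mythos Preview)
Your proof is correct and follows essentially the same route as the paper's: both take the outer $H^2$ function $G$ with $|G|=\omega$, use uniqueness of outer functions with prescribed modulus to compare $G^2$ with $\omega^2 e^{2\pi i\sigma x}$, and deduce the two $H^2$-membership conditions that pin down the support. Your choice $\widehat\psi=\overline{G}$ (rather than the paper's $\widehat\psi=G$ followed by a final translation) is a cosmetic variant that lands the support directly in $[0,\sigma]$.
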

\begin{proof}
First, we choose $f\in H^2$ such that $|f|=\omega$, then $f^2$ is an outer function with $|f^2|=\omega^2$. But by our assumption so is $\omega^2e^{2\pi i\sigma x}$. Therefore there is a constant $c$ with $|c|=1$ such that $f^2=c\omega^2e^{2\pi i\sigma x}$. On the other hand, $\omega^2=f\bar{f}$, so we have $f=c\bar{f}e^{2\pi i\sigma x}$ and thus $\bar{f}e^{2\pi i\sigma x}\in H^2$. This implies that $\supp\widehat{f}$ is contained in $(-\infty,\sigma]$ and thus in $[0,\sigma]$. We can now take $\psi(\xi)=(\mathcal{F}^{-1}f)(\xi+\sigma)$.
\end{proof}


Now we can state our main theorem in this section.

\begin{thm}
\label{t:effect-multiplier}
Assume that $0<\omega\leq 1$ satisfies $\mathcal{L}(\omega)>-\infty$, and
\begin{equation}
\label{e:hilbert-bd}
\|(\mathcal{H}(\Omega))'\|_{L^\infty}\leq \frac{\pi}{2}\sigma,
\end{equation}
where $0<\sigma<1/10$, $\Omega=-\log\omega$ and $\mathcal{H}$ is the Hilbert transform defined in \eqref{e:hilbert}. Then there is $\psi\in L^2(\mathbb{R})$ with
\begin{equation}
\label{e:psi-supp}
\supp\psi\subset[0,\sigma],
\quad |\widehat{\psi}|\leq\omega,
\end{equation}
and 
\begin{equation}
\label{e:psi-lb}
\|\widehat{\psi}\|_{L^2[-1,1]}\geq \frac{\sigma^6}{600000}\min(\|\omega\|_{L^2(1/2,1)},\|\omega\|_{L^2(-1,-1/2)}).
\end{equation}
\end{thm}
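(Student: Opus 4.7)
The plan is to reduce to Lemma~\ref{p:modulus} by constructing a majorant $\Omega' = \Omega + \Omega_1$ with $\Omega_1 \geq 0$ for which the outer condition of that lemma is satisfied by $\omega' = e^{-\Omega'} \leq \omega$. Since the canonical outer function with modulus $(\omega')^2$ is $e^{-2\Omega' - 2i\mathcal{H}(\Omega')}$ up to a unimodular factor, matching phases with $e^{2\pi i\sigma x}$ forces $\mathcal{H}(\Omega')(x) + \pi\sigma x$ to be constant, equivalently $(\mathcal{H}(\Omega_1))'(x) = -\pi\sigma - (\mathcal{H}(\Omega))'(x)$. By \eqref{e:hilbert-bd}, the right-hand side is an $L^\infty$ function taking values in $[-\tfrac{3\pi\sigma}{2}, -\tfrac{\pi\sigma}{2}]$.

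I would construct such an $\Omega_1$ following the strategy of \cite[\S 1.9]{bm7}: combine a slow-growth piece such as $A\langle x\rangle^{1/2}$, whose Hilbert transform has a bounded derivative by Example~2 and which absorbs the linear-at-infinity behavior that the target Hilbert transform derivative forces on $\Omega_1$, with a correction in $L^1(\langle x\rangle^{-2}dx)$ obtained by applying the inversion formula \eqref{e:inversion} to an antiderivative of the remaining bounded piece. The constants $A$ and an additive constant are chosen to enforce $\Omega_1 \geq 0$ globally and to keep $\|\Omega_1\|_{L^\infty([-1,1])} \leq K\log(1/\sigma) + K'$ with explicit universal $K, K'$; the key step is a principal-value integral estimate modeled on the one carried out in Example~2.

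With such $\Omega_1$ in hand, Lemma~\ref{p:modulus} immediately delivers $\psi \in L^2(\mathbb{R})$ with $\supp\psi \subset [0,\sigma]$ and $|\widehat\psi| = \omega' \leq \omega$, proving \eqref{e:psi-supp}. For the lower bound \eqref{e:psi-lb}, I would estimate
\[
\|\widehat\psi\|_{L^2[-1,1]} \;\geq\; \|\omega'\|_{L^2(1/2,1)} \;=\; \|e^{-\Omega_1}\omega\|_{L^2(1/2,1)} \;\geq\; c\,\sigma^{m}\,\|\omega\|_{L^2(1/2,1)},
\]
and likewise on $(-1,-1/2)$, where the final inequality uses the pointwise bound on $\Omega_1$ from the previous step. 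Taking the minimum over the two intervals and tracking the constants yields \eqref{e:psi-lb} with the claimed factor $\sigma^6/600000$.

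The main obstacle is the construction of $\Omega_1$: the Hilbert transform is nonlocal and does not preserve $L^\infty$, so simultaneously enforcing the prescribed Hilbert transform derivative, the nonnegativity of $\Omega_1$, and a good pointwise bound on $[-1,1]$ requires delicate global cancellations between the $\langle x\rangle^{1/2}$-piece and the inverted bounded-part piece. It is precisely this balancing---together with the principal-value estimates of the sort illustrated in Example~2---that produces the specific polynomial exponent $6$ appearing in \eqref{e:psi-lb}.
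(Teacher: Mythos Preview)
Your proposal has a genuine gap at the central step: you cannot solve $(\mathcal{H}(\Omega_1))'(x)=-\pi\sigma-(\mathcal{H}(\Omega))'(x)$ with a nonnegative $\Omega_1$ in the domain of $\mathcal{H}$. The right-hand side lies in $[-\tfrac{3\pi\sigma}{2},-\tfrac{\pi\sigma}{2}]$, so any antiderivative grows like $-\pi\sigma x$ at infinity. Subtracting the contribution of a piece $A\langle x\rangle^{1/2}$ does not help: by the computation in Example~2, $\mathcal{H}(\langle x\rangle^{1/2})'=\mathcal{H}_0\big(\tfrac{x}{2}\langle x\rangle^{-3/2}\big)$ is $O(\langle x\rangle^{-1/2})$, so $\mathcal{H}(A\langle x\rangle^{1/2})$ grows only like $|x|^{1/2}$, and the ``remaining'' antiderivative still grows linearly. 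A linearly growing function is \emph{not} in $L^1(\mathbb{R},\langle x\rangle^{-2}dx)$, so the inversion formula \eqref{e:inversion} is unavailable and the correction $\Omega_1^{(2)}$ you describe cannot be constructed this way.

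The underlying error is the phase condition you impose. For $(\omega')^2e^{2\pi i\sigma x}$ to be outer it is enough that $\mathcal{H}(\Omega')(x)+\pi\sigma x$ be constant \emph{modulo $\pi$}, not exactly constant. The paper exploits this freedom: after first replacing $\omega$ by $\omega_0=\omega/(x^2+T^2)^3$ with $T=12/(\pi\sigma)$ (so the eventual majorant lies in $L^2$), one forms the monotone function $s_0(x)=\pi\sigma x+\mathcal{H}(\Omega_0)(x)$ and the bounded ``sawtooth'' $s(x)=s_0(x)-\pi\lfloor\pi^{-1}s_0(x)\rfloor-\tfrac{\pi}{2}$. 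Since $\|s\|_{L^\infty}\le\tfrac{\pi}{2}$, its Hilbert transform $M=\mathcal{H}(s)$ is well-defined, and the integer-valued jump function disappears in $e^{2\pi i k(x)}\equiv 1$, making $\widetilde{\omega}^2e^{2\pi i\sigma x}$ outer. The exponent $6$ in \eqref{e:psi-lb} then comes not from balancing in an $\Omega_1$-construction but from the factor $(x^2+T^2)^{-3}$ evaluated on $(1/2,1)$, i.e.\ $(1+T^2)^{-3}\sim\sigma^6$. The $\min$ over the two half-intervals appears because $k$ is constant on only one of $[0,2]$ or $[-2,0]$, and only there does one get the two-sided bound on $M$ needed for the lower estimate.
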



\begin{proof}
We follow the proof in \cite{bm7}. The idea is to find some $\widetilde{\omega}$ satisfying the assumption of Lemma \ref{p:modulus} such that $0\leq\widetilde{\omega}\leq\omega$. In particular, we need $\widetilde{\omega}^2e^{2\pi i\sigma x}$ to be an outer function, in other word, for some constant $c$,
\begin{equation*}
\mathcal{H}(2\log\widetilde{\omega})\equiv 2\pi\sigma x+c \mod{2\pi}.
\end{equation*}
If we write $\widetilde{\omega}=\widetilde{m}\omega$ with $0\leq\widetilde{m}\leq1$, then we need to solve
\begin{equation*}
2\mathcal{H}(\log\widetilde{m})\equiv 2\pi\sigma x+c-2\mathcal{H}(\log\omega)
\mod{2\pi}
\end{equation*}
or equivalently,
\begin{equation}
\label{e:outer-condition}
\mathcal{H}(\log\widetilde{m})\equiv \pi\sigma x+\frac{c}{2}-\mathcal{H}(\log\omega)
\mod{\pi}.
\end{equation}
To obtain $\log\widetilde{m}$, we hope to use the inversion formula \eqref{e:inversion} which requires to take Hilbert transform of $\mathcal{H}(\log\widetilde{m})$. The way to make this Hilbert transform meaningful is to modify the right-hand side of \eqref{e:outer-condition} by constant multiples of $\pi$ to make it bounded. For example, if $\omega\equiv1$, then $\mathcal{H}(\log\omega)\equiv0$ and we can take the sawtooth function (see Figure \ref{f:sawtooth})
\begin{equation}
\label{e:sawtooth}
s(x)=\pi\sigma x-\pi\lfloor\sigma x\rfloor-\frac{\pi}{2}.
\end{equation}
Then $\log\widetilde{m}=-\mathcal{H}(s)$ solves \eqref{e:outer-condition} when $\omega\equiv1$.

\begin{figure}
\centering
\includegraphics[scale=1.1]{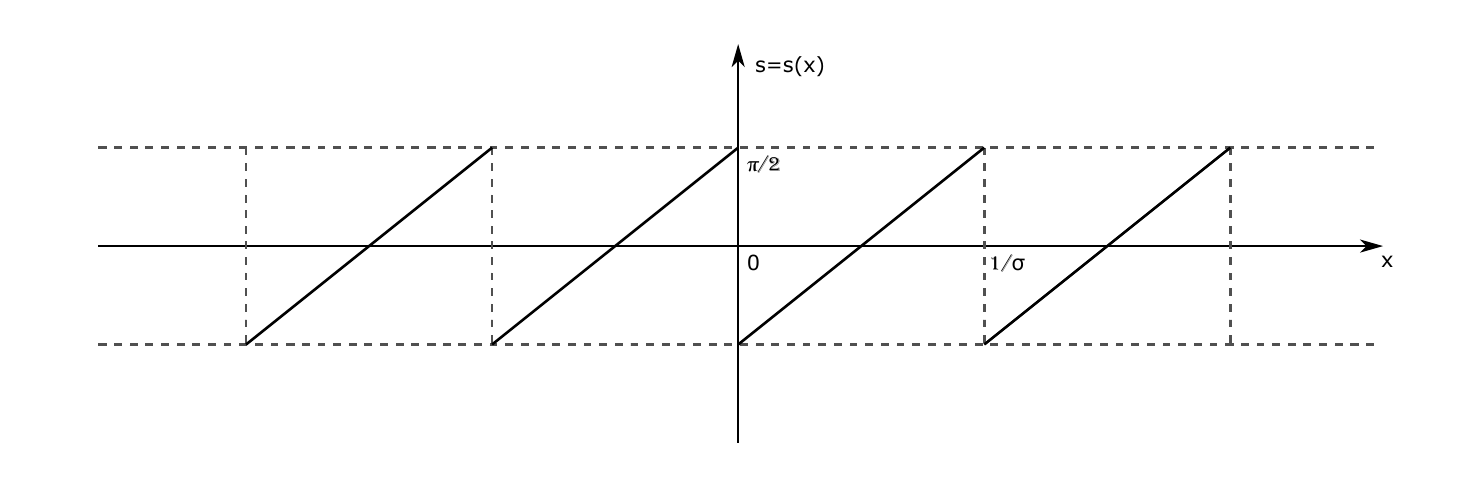}
\caption{The Sawtooth Function.}
\label{f:sawtooth}
\end{figure}

In general, we also need to make sure that $\widetilde{\omega}=\widetilde{m}\omega\in L^2$. However, in the construction, $\widetilde{m}$ may grow polynomially while $\omega$ is merely bounded. To handle this issue, we start the argument above with $\omega$ replaced by some function which already decays polynomially so that the $\widetilde{\omega}$ we obtain is in $L^2$.

To be more precise, we first set 
\begin{equation*}
\omega_0(x)=\frac{\omega(x)}{(x^2+T^2)^3}, \quad \Omega_0=-\log\omega_0
\end{equation*}
where $T$ will be chosen in a moment. Then
\begin{equation*}
\Omega_0=\Omega+3\log(x^2+T^2)
\end{equation*}
and by \eqref{e:example1}, we can compute
\begin{equation*}
\mathcal{H}(\log(x^2+T^2))'=\mathcal{H}(\log((\frac{x}{T})^2+1))'
=T^{-1}\mathcal{H}(\log(x^2+1))'(\cdot/T)=\frac{-2T}{x^2+T^2}.
\end{equation*}
Therefore if we choose $T=\frac{12}{\pi\sigma}$, then
\begin{equation}
\label{e:hilbert-bd2}
\|\mathcal{H}(\Omega_0)'\|_{L^\infty}\leq\|\mathcal{H}(\Omega)'\|_{L^\infty}
+3\|\mathcal{H}(\log(x^2+T^2))'\|_{L^\infty}\leq \pi\sigma.
\end{equation}

\begin{figure}
\centering
\includegraphics[scale=1.1]{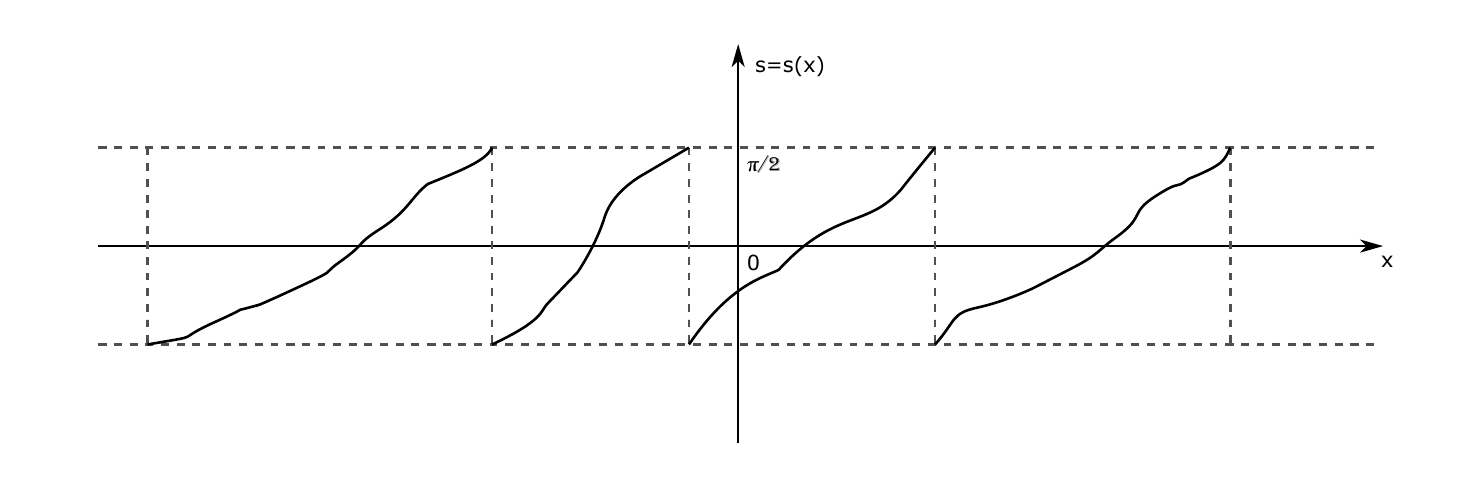}
\caption{The Function $s$.}
\label{f:sawtooth2}
\end{figure}

Let $m=e^{-M}$ where $M=\mathcal{H}(s)$ and $s$ is the function defined by
\begin{equation*}
s(x)=s_0(x)-\pi k(x)-\frac{\pi}{2}
\end{equation*}
(see Figure \ref{f:sawtooth2}) where
\begin{equation*}
s_0(x)=\pi\sigma x+\mathcal{H}(\Omega_0)(x),
\quad
k(x)=\lfloor\pi^{-1}s_0(x)\rfloor.
\end{equation*}
We remark that $s_0$ is monotone increasing and so is $k$. Moreover, $k$ only takes integer values. Now we have $s\in L^\infty$ and thus in $L^1(\mathbb{R},\langle x\rangle^{-2}dx)$. Moreover, $\|s\|_{L^\infty}\leq\frac{\pi}{2}$. By definition \eqref{e:hilbert}
of $\mathcal{H}$, we can write $M(x)=J_1+J_2+J_3$ where
\begin{equation*}
J_1=\frac{1}{\pi}\int_{|x-t|<1/2}\frac{s(t)-s(x)}{x-t}dt;
\end{equation*}
\begin{equation*}
J_2=\frac{1}{\pi}\int_{|x-t|<1/2}s(t)\frac{t}{t^2+1}dt;
\end{equation*}
and
\begin{equation*}
J_3=\frac{1}{\pi}\int_{|x-t|\geq1/2}s(t)\left(\frac{1}{x-t}+\frac{t}{t^2+1}\right)dt.
\end{equation*}

Since for all $t\in\mathbb{R}$, $\left|\frac{t}{t^2+1}\right|\leq 1/2$, we have the bound for $J_2$:
\begin{equation}
\label{e:j2}
|J_2|\leq\frac{1}{\pi}\cdot\|s\|_{L^\infty}\cdot\frac{1}{2}\leq\frac{1}{4}.
\end{equation}

Also, we have
\begin{equation*}
|J_3|\leq\frac{1}{\pi}\|s\|_{L^\infty}\int_{|x-t|\geq1/2}
\left|\frac{1}{x-t}+\frac{t}{t^2+1}\right|dt.
\end{equation*}
Here the integral can be computed explicitly by
\begin{equation*}
\begin{split}
\int_{|x-t|\geq1/2}
\left|\frac{1}{x-t}+\frac{t}{t^2+1}\right|dt
=&\;\frac{1}{2}\log((2x+1)^2+4)+\frac{1}{2}\log((2x-1)^2+4)+\log(x^2+1)\\
\leq&\; 6\log(|x|+2).
\end{split}
\end{equation*}
Therefore we have the following bound on $J_3$:
\begin{equation}
\label{e:j3}
|J_3|\leq 3\log(|x|+2).
\end{equation}

Finally, we need to bound $|J_1|$, at least for some part of $|x|\leq 1$. By \eqref{e:hilbert-bd2}, $s_0(x)=\pi\sigma x+\mathcal{H}(\Omega_0)(x)$ is monotone increasing with $\|s_0'\|_{L^\infty}\leq 2\pi\sigma$. In particular, as we assume that $\sigma<\frac{1}{10}$, we see that $\|s_0'\|_{L^\infty}<\frac{\pi}{4}$ and thus the monotone increasing function $k(x)=\lfloor \pi^{-1}s_0(x)\rfloor$ can take at most two different values on $[-2,2]$. Therefore $k$ is either costant on $[0,2]$ or on $[-2,0]$. Without loss of generality, we assume that $k(x)$ is constant on $[0,2]$, then for $x\in(0,2)$, $s$ is Lipschitz with
\begin{equation*}
\|s'\|_{L^\infty}\leq \pi\sigma+\|\mathcal{H}(\Omega_0)'\|_{L^\infty}
\leq 2\pi\sigma
\end{equation*}
Therefore for $x\in(1/2,1)$, 
\begin{equation}
\label{e:j1-bd-1}
|J_1|\leq\frac{1}{\pi}\int_{|x-t|<1/2}\left|\frac{s(t)-s(x)}{x-t}\right|dt
\leq\frac{1}{\pi}\|s'\|_{L^\infty}\leq 2\sigma.
\end{equation}
For all $x$, we only have a lower bound on $J_1$. Since $s_0(x)=\pi\sigma x+\mathcal{H}(\Omega_0)(x)$ is monotone increasing, $k(x)$ is also monotone increasing. Therefore
\begin{equation}
\label{e:j1-bd-2}
J_1\geq\frac{1}{\pi}\int_{|x-t|<1/2}\frac{s_0(t)-s_0(x)}{x-t}dt
\geq-\frac{1}{\pi}(\pi\sigma+\|\mathcal{H}(\Omega_0)'\|_{L^\infty})
\geq-2\sigma.
\end{equation}

Now combining \eqref{e:j2},\eqref{e:j3} and \eqref{e:j1-bd-1},
we have the following estimates on $M$ on $(1/2,1)$:
\begin{equation}
\label{e:mbd-1}
|M(x)|\leq 2\sigma+\frac{1}{4}+3\log3\leq 5.
\end{equation}
and using \eqref{e:j1-bd-2}, we have for all $x$,
\begin{equation}
\label{e:mbd-2}
M(x)\geq -2\sigma-\frac{1}{2}-3\log(|x|+2)\geq-1-3\log(|x|+2).
\end{equation}

Recall that $m=e^{-M}$, We shall apply Lemma \ref{p:modulus} to the function $\widetilde{\omega}=\frac{1}{3}m\omega_0$. We need to check that $\widetilde{\omega}$ satisfies all the assumptions there. First, we note that by \eqref{e:mbd-2},
\begin{equation*}
0\leq \widetilde{\omega}\leq\frac{e}{3}(|x|+2)^3\omega_0\leq\frac{\omega}{x^2+T^2},
\end{equation*} 
so $0\leq\widetilde{\omega}\leq\omega$ and $\widetilde{\omega}\in L^2$. Moreover, \eqref{e:mbd-2} also implies that
\begin{equation*}
\mathcal{L}(\widetilde{\omega})=\mathcal{L}(m/3)+\mathcal{L}(\omega_0)>-\infty.
\end{equation*}
From the construction of $M=\mathcal{H}(s)$ and the inversion formula \eqref{e:inversion}, we see that
\begin{equation*}
\mathcal{H}(-2M-2\Omega_0)
=2s-2\mathcal{H}(\Omega_0)-2c(M)
=2\pi\sigma x-2\pi k(x)-\pi-2c(M)
\end{equation*}
where $k(x)$ always takes integer values (so that $e^{2\pi ik(x)}\equiv1$) and $c(M)$ is a constant. Therefore for some constant $a$ with $|a|=1$,
\begin{equation*}
\widetilde{\omega}^2e^{2\pi i\sigma x}=\frac{1}{9}e^{-2M-2\Omega_0+2\pi i\sigma x}=\frac{a}{9}e^{-2M-2\Omega_0+i\mathcal{H}(-2M-2\Omega_0)}
\end{equation*}
which shows that $\widetilde{\omega}^2e^{2\pi i\sigma x}$ is an outer function.

Now by Lemma \ref{p:modulus}, there exists $\psi\in L^2$ supported in $[0,\sigma]$ with $|\widehat{\psi}|=\widetilde{\omega}\leq\omega$. Moreover, on $(1/2,1)$, by \eqref{e:mbd-1}, and recall that $T=\frac{12}{\pi\sigma}$, we have
\begin{equation*}
|\widehat{\psi}(x)|=\widetilde{\omega}(x)\geq \frac{1}{3}(1+T^2)^{-3}e^{-5}\omega(x)
\geq\frac{\sigma^6}{600000}\omega(x)
\end{equation*}
which gives a lower bound
\begin{equation*}
\|\widehat{\psi}\|_{L^2(-1,1)}\geq \frac{\sigma^6}{600000}\|\omega\|_{L^2(1/2,1)}.
\end{equation*}
This finishes the proof.
\end{proof}


\section{Proof of the main theorem}
\label{s:proof}


Now we follow the argument in Bourgain-Dyatlov \cite[Section 3]{fullgap} with careful treatment of all the constants to finish the proof of Theorem \ref{t:effup}. We recall the notation that $K$ denotes a universal constant which is often large and varies from line to line.


\subsection{A multiplier adapted to the regular sets}


We first prove an effective version of \cite[Lemma 3.1]{fullgap} which gives a multiplier adapted to regular sets, with the help of Theorem \ref{t:effect-multiplier}.

\begin{lem}
\label{p:adapt}
Assume that $Y\subset[-\alpha_1,\alpha_1]$ is a $\delta$-regular set with constant $C_R$ on scales 2 to $\alpha_1$ and $\delta\in(0,1)$. Fix $0<c_1<1$, then there exist a function $\psi\in L^2(\mathbb{R})$ such that
\begin{equation*}
\supp\psi\subset\left[-\frac{c_1}{10},\frac{c_1}{10}\right],
\end{equation*} 
\begin{equation*}
\|\widehat{\psi}\|_{L^2([-1,1])}\geq c_2,
\end{equation*}
\begin{equation*}
|\widehat{\psi}(\xi)|\leq\exp(-c_3\langle\xi\rangle^{1/2}),
\quad \forall\xi\in\mathbb{R}
\end{equation*}
\begin{equation*}
|\widehat{\psi}(\xi)|\leq\exp(-c_3(\log|\xi|)^{-(1+\delta)/2}|\xi|),
\quad \forall\xi\in Y, |\xi|\geq 10,
\end{equation*}
with
\begin{equation*}
c_2=\frac{c_1^6}{K},
\quad c_3=\frac{c_1\delta(1-\delta)}{KC_R^2}.
\end{equation*}
\end{lem}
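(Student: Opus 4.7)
The plan is to apply the effective multiplier Theorem \ref{t:effect-multiplier} with $\sigma := c_1/10 \in (0, 1/10)$ to a carefully constructed weight $\omega = e^{-\Omega}$ adapted to $Y$. The theorem yields $\psi_0$ supported in $[0, \sigma]$; multiplying $\widehat{\psi}_0$ by the phase $e^{-\pi i \sigma x}$ produces the required $\psi$ with support shifted to $[-\sigma/2, \sigma/2] \subset [-c_1/10, c_1/10]$ and the same pointwise and $L^2$ norms.

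The weight will be built in two layers, $\Omega = \Omega_0 + \Omega_Y$. The \emph{uniform layer} $\Omega_0(\xi) = c_3 \langle\xi\rangle^{1/2}$ directly supplies the global bound $|\widehat{\psi}(\xi)| \leq \exp(-c_3\langle\xi\rangle^{1/2})$, and Example 2 (estimate \eqref{e:example2}) controls its contribution: $\|\mathcal{H}(\Omega_0)'\|_{L^\infty} \leq 20c_3$. The \emph{$Y$-adapted layer} $\Omega_Y$ is constructed by a dyadic decomposition: for each integer $n$ with $10 \leq 2^n \leq 2\alpha_1$, Proposition \ref{p:smallcover} yields a cover $\mathcal{J}_n$ of $Y \cap \{|\xi| \in [2^{n-1}, 2^{n+1}]\}$ by at most $KC_R^2 \cdot 2^{n\delta}$ intervals. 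On appropriately chosen neighborhoods of these intervals we place smooth bumps, weighted so that at $\xi \in Y$ with $|\xi| \geq 10$ the sum satisfies $\Omega_Y(\xi) \geq c_3\, |\xi|/(\log|\xi|)^{(1+\delta)/2}$, while arranging $\Omega_Y \equiv 0$ for $|\xi| < 10$.

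Three hypotheses of Theorem \ref{t:effect-multiplier} must then be checked. The two pointwise decay bounds on $|\widehat{\psi}| \leq \omega$ are built into the definitions of $\Omega_0$ and $\Omega_Y$. The logarithmic-integrability $\mathcal{L}(\omega) > -\infty$ reduces to a dyadic sum of the form $\sum_n (\text{bump height})_n \cdot |\mathcal{J}_n| \cdot 2^{-2n}$, which is convergent precisely because $\delta < 1$. Given the Hilbert-transform bound $\|\mathcal{H}(\Omega)'\|_{L^\infty} \leq \pi\sigma/2$, the theorem yields $\|\widehat{\psi}\|_{L^2(-1,1)} \geq (\sigma^6/K) \|\omega\|_{L^2(1/2,1)}$. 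Since $\Omega_Y \equiv 0$ and $\Omega_0 \leq \sqrt{2}\,c_3$ on $(1/2,1)$, the factor $\|\omega\|_{L^2(1/2,1)}$ is bounded below by an absolute constant, giving $\|\widehat{\psi}\|_{L^2(-1,1)} \gtrsim \sigma^6 = (c_1/10)^6 \geq c_1^6/K$, i.e.\ the desired $c_2$.

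The main obstacle is the Hilbert-transform estimate on $\Omega_Y$. Because the required bump heights at scale $n$ grow like $2^n/(\log 2^n)^{(1+\delta)/2}$, a naive single-scale bound would lead to a divergent sum. One must carefully tune the shape of the bumps (the ratio of height to width) and combine this with the Ahlfors--David distribution of $\mathcal{J}_n$ (giving $\sim 2^{n\delta}$ intervals at scale $n$) to obtain a pointwise estimate of the form $\|\mathcal{H}(\Omega_Y)'\|_{L^\infty} \leq K c_3 C_R^2/(\delta(1-\delta))$. The factors $\delta^{-1}$ and $(1-\delta)^{-1}$ emerge from the marginal convergence of the relevant dyadic sums in the regimes $\delta\to 0$ and $\delta\to 1$, respectively. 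Choosing $c_3 = c_1\delta(1-\delta)/(KC_R^2)$ for a sufficiently large universal $K$ absorbs all constants and yields $\|\mathcal{H}(\Omega)'\|_{L^\infty} \leq \pi\sigma/2 = \pi c_1/20$, closing the argument.
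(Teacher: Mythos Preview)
Your overall strategy is exactly the paper's: build a two-layer weight $\Omega=\Omega_1+\Omega_Y$ (uniform plus $Y$-adapted), bound $\|\mathcal H(\Omega)'\|_{L^\infty}$, and feed $\omega^{c_3}$ (equivalently, build $c_3$ into $\Omega$) into Theorem~\ref{t:effect-multiplier} with $\sigma=c_1/10$. The lower bound on $\|\widehat\psi\|_{L^2(-1,1)}$ and the role of Example~\eqref{e:example2} for $\Omega_0$ are handled correctly.

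There is, however, a concrete slip at the point you yourself flag as the main obstacle. You write that Proposition~\ref{p:smallcover} gives a cover of $Y\cap\{|\xi|\sim 2^n\}$ by ``at most $KC_R^2\cdot 2^{n\delta}$ intervals''. That count corresponds to intervals of size $O(1)$; with unit-width bumps of the height $\sim 2^n/n^{(1+\delta)/2}$ needed for the $Y$-bound, the bump derivatives blow up like $2^n$ and the Hilbert-transform sum diverges. The ``tuning of height to width'' you allude to is not optional and has to be specified: the paper takes the covering intervals themselves of size
\[
\rho_n \;=\; n^{-(1+\delta)/2}\,2^n,
\]
so that there are only $N_n\le 24C_R^2\,n^{\delta(1+\delta)/2}$ of them (polynomially many in $n$, not $2^{n\delta}$), and then sets each bump $\chi_J$ to have \emph{both} height and width equal to $|J|=\rho_n$. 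This single choice simultaneously (i) gives $\Omega_Y(\xi)\ge\rho_n\sim|\xi|/(\log|\xi|)^{(1+\delta)/2}$ on $Y$, and (ii) makes $\sup|\chi_J'|\le 10$ uniformly, so $\mathcal H_0(\chi_J')$ is a translated/dilated copy of a fixed $\widetilde\chi$ with $|\widetilde\chi(x)|\le K\langle x\rangle^{-2}$. The dyadic sum then collapses to $\sum_n N_n\, n^{-1-\delta}\le KC_R^2\sum_n n^{-1-\frac12\delta(1-\delta)}\le KC_R^2/(\delta(1-\delta))$, which is where the factors $\delta^{-1}$ and $(1-\delta)^{-1}$ genuinely arise. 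Once you replace your $2^{n\delta}$ count by this $\rho_n$-cover and fix height $=$ width $=\rho_n$, the rest of your outline goes through verbatim and matches the paper.
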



\begin{proof}
Recall that in the proof of \cite[Lemma 3.1]{fullgap}, we construct a specific weight $\omega$ adapted to the regular set $Y$. By Proposition \ref{p:smallcover}, we first cover $Y$ by nonoverlapping intervals: Let $A_n=[-2^{n+1},-2^n]\cup[2^n,2^{n+1}]$, then we have a collection $\mathcal{J}_n$ of $N_n$ intervals of size $\rho_n:=n^{-\frac{1+\delta}{2}}2^n$ such that each elements is of the form $\rho_n[j,j+1]$, $j\in\mathbb{Z}$, intersects $A_n$ and
\begin{equation*}
Y\cap A_n\subset\bigcup_{J\in\mathcal{J}_n}J.
\end{equation*}
Moreover, the number $N_n$ of the intervals in $\mathcal{J}_n$ satisfies
\begin{equation*}
N_n\leq 24C_R^2\left(\frac{2^n}{\rho_n}\right)^\delta=24C_R^2n^{\delta(1+\delta)/2}.
\end{equation*}

We fix a cutoff function $\chi(\xi)\in C^\infty(\mathbb{R};[0,1])$ such that
\begin{equation*}
\supp\chi\subset[-1,1], 
\quad
\sup|\partial_\xi\chi|\leq 10,
\quad
\chi=1 \text{ on } [-1/2,1/2].
\end{equation*}
For an interval $J$ with center $\xi_J$, define the cutoff function $\chi_J\in C^\infty(\mathbb{R})$ by rescaling $\chi$:
\begin{equation*}
\chi_J(\xi):=(|J|T_{\xi_J}D_{|J|}\chi)(\xi)=|J|\chi\left(\frac{\xi-\xi_J}{|J|}\right).
\end{equation*}
Then $\chi_J$ satisfies the following 
\begin{equation*}
\supp\chi_J\subset\widetilde{J}=\xi_J+[-|J|,|J|],
\quad
\chi_J=|J| \text{ on } J,
\end{equation*}
\begin{equation*}
0\leq\chi_J\leq|J|,
\quad \sup|\partial_\xi\chi_J|\leq10.
\end{equation*}
Now we define $\omega\in C^\infty(\mathbb{R};(0,1))$ by
\begin{equation*}
\omega(\xi)=\exp(-\langle\xi\rangle^{1/2})\prod_{n=1}^{n_1}\prod_{J\in\mathcal{J}_n}\exp(-10\chi_J),
\end{equation*}
and $\Omega:=-\log\omega=\Omega_1+10\Omega_2$ where $\Omega_1(\xi)=\langle\xi\rangle^{1/2}$, and
\begin{equation*}
\Omega_2=\sum_{n=1}^{n_1}\sum_{J\in\mathcal{J}_n}\chi_J.
\end{equation*}

First, as in \cite[Lemma 3.1]{fullgap}, we have 
\begin{equation*}
\mathcal{L}(\omega)=-\int\frac{\Omega(x)}{1+x^2}dx>-\infty
\end{equation*}

We can bound $\|\mathcal{H}(\Omega)'\|_{L^\infty}$ uniformly in $\alpha_1$. First, by \eqref{e:example2}, we see that 
\begin{equation}
\label{e:bound-omeag-1}
\|\mathcal{H}(\Omega_1)'\|_{L^\infty}\leq 20.
\end{equation}
For
\begin{equation*}
\mathcal{H}(\Omega_2)'=\mathcal{H}_0(\Omega_2')
=\sum_{n=1}^{n_1}\sum_{J\in\mathcal{J}_n}\mathcal{H}_0(\chi_J'),
\end{equation*}
we have by \eqref{e:hilbert-scale}, $\mathcal{H}_0(\chi_J')=\widetilde{\chi}\left(\frac{\cdot-\xi_J}{|J|}\right)$, where $\widetilde{\chi}(x)=\mathcal{H}_0(\chi)'(x)$ is smooth and by \eqref{e:hilbert-deriative-bound},
satisfies the bound
\begin{equation}
\label{e:bound-chi-tilde}
|\widetilde{\chi}(x)|\leq\frac{K}{1+x^2}.
\end{equation}

Now assume that $\xi\in[2^m,2^{m+1}]$ for some $m\geq1$. We estimate
\begin{equation*}
\mathcal{H}(\Omega_2)'(\xi)=\sum_{n=1}^{n_1}\sum_{J\in\mathcal{J}_n}\widetilde{\chi}\left(\frac{\xi-\xi_J}{|J|}\right),
\end{equation*}
by considering different cases for $n$.

When $n\geq m+3$, for each $J\in\mathcal{J}_n$, 
\begin{equation*}
|\xi-\xi_J|\geq 2^n-\xi-\frac{1}{2}\rho_n\geq 2^{n-1}-\xi\geq 2^{n-2}=\frac{1}{4}n^{\frac{1+\delta}{2}}|J|,
\end{equation*}
thus by \eqref{e:bound-chi-tilde}, 
\begin{equation*}
\left|\widetilde{\chi}\left(\frac{\xi-\xi_J}{|J|}\right)\right|\leq Kn^{-1-\delta}.
\end{equation*}
As there are totally $N_n$ intervals in $\mathcal{J}_n$, 
\begin{equation}
\label{e:bound-mnfar}
\sum_{J\in\mathcal{J}_n}\left|\widetilde{\chi}\left(\frac{\xi-\xi_J}{|J|}\right)\right|
\leq KN_nn^{-1-\delta}
\leq KC_R^2n^{-1-\frac{1}{2}(\delta-\delta^2)}.
\end{equation}

When $n\leq m-2$, for each $J\in\mathcal{J}_n$,
\begin{equation*}
|\xi-\xi_J|\geq \xi-2^{n+1}-\frac{1}{2}\rho_n\geq 2^{n+1}-\xi\geq 2^n=n^{\frac{1+\delta}{2}}|J|,
\end{equation*}
thus we have \eqref{e:bound-mnfar} again, as above.

Finally, for each individual $n$, since all $J\in\mathcal{J}_n$ are non-overlapping and have the same length, the points $\frac{\xi-\xi_J}{|J|}$ for $J\in\mathcal{J}_n$ are separated from each other by distance at least 1. Therefore
we have
\begin{equation}
\label{e:bound-mnclose}
\sum_{J\in\mathcal{J}_n}\left|\widetilde{\chi}\left(\frac{\xi-\xi_J}{|J|}\right)\right|\leq2\sum_{n=0}^\infty\frac{K}{1+n^2}\leq K.
\end{equation}
Combining \eqref{e:bound-mnfar} and \eqref{e:bound-mnclose}, we get
\begin{equation}
\label{e:bound-omega-2}
|\mathcal{H}(\Omega_2)'|\leq K+KC_R^2\sum_{n=1}^\infty n^{-1-\frac{1}{2}(\delta-\delta^2)}\leq\frac{KC_R^2}{\delta(1-\delta)}.
\end{equation}
Now we have the uniform bound for $\mathcal{H}(\Omega)'$ from \eqref{e:bound-omeag-1} and \eqref{e:bound-omega-2}
\begin{equation*}
\|\mathcal{H}(\Omega)'\|_{L^\infty}\leq \frac{KC_R^2}{\delta(1-\delta)}.
\end{equation*}
For $\xi\in[-2,2]$, we can get the same upper bound similarly. Now we can apply Theorem \ref{t:effect-multiplier} to $\omega^{c_3}$ to conclude the proof.
\end{proof}


\subsection{The iterative step}

We also need the following bound on functions with compactly supported Fourier transform \cite[Lemma 3.2]{fullgap} but with explicit constants. 


\begin{lem}
\label{p:boundcompact}
Assume that $\mathcal{I}$ is a nonoverlapping collection of intervals of size 1 each, and for each $I\in\mathcal{I}$ we choose a subinterval $I''$ with $|I''|=c_0>0$ independent of $I$. Then for all $r\in(0,1)$, $\kappa\in(0, \frac{c_0}{8}e^{-2/r})$, and $f\in L^2(\mathbb{R})$ with $\widehat{f}$ compactly supported, we have
\begin{equation}
\label{e:boundcompact}
\sum_{I\in\mathcal{I}}\|f\|_{L^2(I)}^2
\leq \frac{K}{c_0r}\left(\sum_{I\in\mathcal{I}}\|f\|_{L^2(I'')}^2\right)^\kappa\|e^{2\pi r|\xi|}\widehat{f}(\xi)\|_{L^2}^{2(1-\kappa)}.
\end{equation}
\end{lem}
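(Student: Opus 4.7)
The plan is to establish a per-interval Remez-type inequality for $f$ and then sum via H\"older's inequality. Since $\widehat f$ is compactly supported, $f$ extends to an entire function; applying Plancherel to $e^{2\pi y\xi}\widehat f(\xi)$ yields the uniform strip bound
\begin{equation*}
\|f(\cdot+iy)\|_{L^2(\mathbb{R})}\le M:=\|e^{2\pi r|\xi|}\widehat f\|_{L^2}, \qquad |y|\le r,
\end{equation*}
so $f$ is holomorphic on the strip $S_r=\{z\in\mathbb{C}:|\Im z|<r\}$ with controlled $L^2$ boundary norms on both horizontal edges.

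For the per-interval step, I would fix $I\in\mathcal{I}$, reduce to $I=[0,1]$ by translation, and work on the rectangle $R_I=I\times(-r,r)\subset S_r$. The function $\log|f|$ is subharmonic on the slit domain $R_I\setminus I''$, and the maximum principle there yields, for every $x\in I$,
\begin{equation*}
|f(x)|^2\le\bigl(\sup_{I''}|f|^2\bigr)^{\kappa(x)}\bigl(\sup_{R_I}|f|^2\bigr)^{1-\kappa(x)},
\end{equation*}
where $\kappa(x)=\omega_x(I'';R_I\setminus I'')$ is the harmonic measure of the slit seen from $x$. A Poisson-kernel computation on the strip of half-width $r$, combined with a barrier estimate at the short vertical sides of $R_I$, should give the uniform lower bound $\kappa(x)\ge\tfrac{c_0}{8}e^{-2/r}$ for $x\in I$, which is exactly the allowed range for $\kappa$.

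Next I would convert the pointwise $L^\infty$ bounds into $L^2$ ones using the mean-value property for the subharmonic $|f|^2$: on a disk of radius $\sim r$ contained in a slight enlargement of $R_I$ one gets $\sup_{R_I}|f|^2\le (K/r)\,G_I$, where $G_I:=\int_{-r}^{r}\|f(\cdot+iy)\|_{L^2(3I)}^2\,dy$ is a localized strip energy; the $G_I$ have bounded overlap, so $\sum_{I\in\mathcal{I}}G_I\le KM^2$. Similarly, the mean value on a disk of radius $\sim c_0$ around a maximizer gives $\sup_{I''}|f|^2\le (K/c_0)\|f\|_{L^2(I'')}^2$, up to lower-order strip tails absorbed into $G_I$. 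Integrating the pointwise estimate over $x\in I$ (length $1$), summing over $I\in\mathcal{I}$, and applying H\"older's inequality with conjugate exponents $1/\kappa$ and $1/(1-\kappa)$ should deliver \eqref{e:boundcompact} with prefactor $K/(c_0r)$.

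The main obstacle will be the explicit lower bound $\kappa(x)\ge\tfrac{c_0}{8}e^{-2/r}$ on the harmonic measure of the slit. The Poisson kernel on the strip of half-width $r$ decays exponentially in the ratio of horizontal distance to $r$; the normalization $|I|=1$ keeps this distance at most $1$, so the exponent is at most $2/r$. Verifying that restricting from the full strip $S_r$ to the finite rectangle $R_I$ costs only a universal factor requires a reflection argument across the short sides, and the presence of the slit $I''$ (rather than the unslit interior) only increases the harmonic measure by monotonicity. A secondary technicality will be balancing the two mean-value conversions so that the $c_0 r$ dependence in the prefactor comes out clean.
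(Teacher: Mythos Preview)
Your overall architecture---extend $f$ holomorphically to a strip, use a two-constants/harmonic-measure bound on each $I$, then sum by H\"older---is exactly the route taken in Bourgain--Dyatlov \cite[Lemma~3.2]{fullgap}, to which the present paper simply defers while tracking the $c_0$-dependence of the constant (see the remark immediately following the lemma). So the strategy is right. But one step in your sketch does not work as written.

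The problematic step is the conversion
\[
\sup_{I''}|f|^2\ \le\ \frac{K}{c_0}\,\|f\|_{L^2(I'')}^2\quad\text{``up to strip tails absorbed into }G_I\text{''}.
\]
This is false for band-limited $L^2$ functions. Take $f_R(x)=R\,\mathrm{sinc}(Rx)$ with $R$ large and $0\in I''$: then $\sup_{I''}|f_R|^2\sim R^2$ while $\|f_R\|_{L^2(I'')}^2\sim R$, so the ratio is $\sim R$, not bounded by $K/c_0$. The sub-mean-value inequality only gives $\sup_{I''}|f|^2\le (K/c_0^2)\int_{D}|f|^2$ over a \emph{two-dimensional} disk $D$; the off-axis contribution is not a tail---it is the main term in the concentrated case---so ``absorbing'' it into $G_I$ leaves you with $\sup_{I''}|f|^2\lesssim G_I/c_0$ and you have lost the $\|f\|_{L^2(I'')}$ information entirely. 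Plugging that back into your pointwise bound yields only the trivial estimate $\|f\|_{L^2(I)}^2\lesssim G_I$.

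The fix, and what \cite{fullgap} actually does, is to avoid the intermediate $\sup_{I''}$ altogether. One writes a Poisson-type representation for $\log|f|$ on the (slit) strip so that the boundary contribution from $I''$ already appears as an integral $\int_{I''}K(x,t)\log|f(t)|\,dt$; exponentiating and applying Jensen/H\"older then produces $\|f\|_{L^2(I'')}$ directly, with the kernel singularity at the endpoints of $I''$ accounting for the constant
\[
C=\Big(\int_0^{c_0}\big(x(c_0-x)\big)^{-2/3}\,dx\Big)^3\le K/c_0
\]
recorded in the paper. If you want to keep your rectangle picture, the right object to bound from below is the harmonic measure \emph{density} of $I''$ along the real axis (not just its total mass), and then integrate in $t\in I''$ before taking sup in $x$; that recovers the $L^2(I'')$ norm without passing through $L^\infty(I'')$. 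Your harmonic-measure lower bound $\kappa(x)\ge \tfrac{c_0}{8}e^{-2/r}$ and the $\sup_{R_I}\to G_I$ conversion are fine; only the $I''$ side needs to be redone.
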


We remark that in \cite[Lemma 3.2]{fullgap}, we have $C/r$ instead of $K/c_0r$ on the right-hand side of \eqref{e:boundcompact} with $C$ depending only on $c_0$. The dependence of $C$ on $c_0$ comes from \cite[(3.16)]{fullgap} and is given by \cite[Lemma 2.13]{fullgap}:
\begin{equation*}
C=\left(\int_0^{c_0}(x(c_0-x))^{-2/3}dx\right)^3\leq K/c_0.
\end{equation*}


Now let $\mathcal{I}=\{[j,j+1]\colon j\in\mathbb{Z}\}$ and for each $I\in\mathcal{I}$, we are given a subinterval $I'\subset I$ with
$|I'|=c_1>0$ independent of $I$. Define 
\begin{equation*}
U'=\bigcup_{I\in\mathcal{I}}I'.
\end{equation*}

As in \cite[\S 3.3]{fullgap}, using the previous two lemmas, we obtain the following lemma (see \cite[Lemma 3.4]{fullgap}). We note that in \cite{fullgap}, the notation is slightly different: the letter $K$ (instead of $M$) is used to denote the frequency separating the low frequency part and the high frequency part. However, we use $M$ since $K$ is always a universal constant in our notation.


\begin{lem}
\label{p:iterative-1}
Assume that $Z\subset[-\alpha_1,\alpha_1]$ is $\delta$-regular with constant $C_R$ on scales 1 to $\alpha_1\geq2$. Then for all $f\in L^2(\mathbb{R})$ with $\supp\widehat{f}\subset Z$, all $M>10$, and 
\begin{equation*}
\kappa=\exp(-C_1(\log M)^{\frac{1+\delta}{2}}),
\end{equation*}
we have
\begin{equation*}
\|\widehat{f}\|_{L^2(-1,1)}^2
\leq C_2M^{21}
\left(\|1_{U'}f\|_{H^{-10}}^2+\exp(-C_3^{-1}(\log M)^{-\frac{1+\delta}{2}}M)\|f\|_{H^{-10}}^2\right)^{\kappa}\|f\|_{H^{-10}}^{2(1-\kappa)}.
\end{equation*}
Here
\begin{equation}
\label{e:consts}
C_1=\frac{KC_R^2(1+\log(1/c_1))}{c_1\delta(1-\delta)}，
\quad
C_2=\frac{KC_R^{62}}{c_1^{44}\delta^{31}(1-\delta)^{31}},
\quad
C_3=\frac{KC_R^2}{c_1\delta(1-\delta)}.
\end{equation}
\end{lem}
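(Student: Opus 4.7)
The plan is to combine the multiplier from Lemma~\ref{p:adapt} with the Bernstein-type estimate of Lemma~\ref{p:boundcompact} via a low/high frequency decomposition at scale $M$. The multiplier provides strong frequency-side decay on $Z$; the Bernstein estimate then converts $L^2$ control on compactly-Fourier-supported functions into a concentration bound on small subintervals.

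First I apply Lemma~\ref{p:adapt} to the regular set $Z$ with parameter $c_1$, obtaining $\psi\in L^2(\mathbb{R})$ supported in $[-c_1/10,c_1/10]$ with $\|\widehat{\psi}\|_{L^2[-1,1]}\geq c_2$, the universal decay $|\widehat{\psi}(\xi)|\leq\exp(-c_3\langle\xi\rangle^{1/2})$, and the stronger decay $|\widehat{\psi}(\xi)|\leq\exp(-c_3(\log|\xi|)^{-(1+\delta)/2}|\xi|)$ for $\xi\in Z$, $|\xi|\geq 10$. Setting $h:=\psi*f$, so that $\widehat{h}=\widehat{\psi}\widehat{f}$ is supported in $Z$, I split $h=h_L+h_H$ according to $|\xi|\leq M$ and $|\xi|>M$. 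The decay of $\widehat{\psi}$ on $Z\cap\{|\xi|>M\}$ gives
\[
\|h_H\|_{H^{-10}}^2\leq\exp\bigl(-2c_3(\log M)^{-(1+\delta)/2}M\bigr)\|f\|_{H^{-10}}^2,
\]
which is exactly the exponentially small error term in the conclusion (identifying $C_3^{-1}\sim 2c_3$, consistent with \eqref{e:consts}).

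Since $\widehat{h_L}$ is compactly supported in $[-M,M]$, Lemma~\ref{p:boundcompact} applies with $\mathcal{I}=\{[j,j+1]:j\in\mathbb{Z}\}$ and subintervals $I''\subset I'$ of length $c_0$ a definite fraction of $c_1$ chosen so that $I''+[-c_1/10,c_1/10]\subset I'$. Because $\psi$ is supported in $[-c_1/10,c_1/10]$, for $x\in I''$ the convolution $(\psi*f)(x)$ depends only on $f|_{I'}$, and therefore $\sum_I\|h_L\|_{L^2(I'')}^2\lesssim\|1_{U'}f\|_{L^2}^2+\|h_H\|_{L^2}^2$. Picking $r\asymp(\log M)^{-(1+\delta)/2}$ makes the admissibility hypothesis $\kappa<(c_0/8)\exp(-2/r)$ hold precisely for $\kappa=\exp(-C_1(\log M)^{(1+\delta)/2})$ with the $C_1$ of \eqref{e:consts} (the $1+\log(1/c_1)$ factor arises from $-\log(c_0/8)$), and the frequency-weighted factor satisfies $\|e^{2\pi r|\xi|}\widehat{h_L}\|_{L^2}^2\leq e^{4\pi rM}\|f\|_{L^2}^2\leq M^{K}\|f\|_{L^2}^2$. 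This produces the $M^{21}$ factor in the conclusion after converting $L^2$ to $H^{-10}$ norms, which only costs polynomial factors in $M$.

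The remaining step is to convert the bound on $\|h_L\|_{L^2}$ back to $\|\widehat{f}\|_{L^2[-1,1]}$ using the non-triviality $\|\widehat{\psi}\|_{L^2[-1,1]}\geq c_2$. A close inspection of the proof of Theorem~\ref{t:effect-multiplier} actually yields a pointwise bound $|\widehat{\psi}(\xi)|\geq c_2'>0$ on the subinterval $(1/2,1)\subset[-1,1]$, so $\|\widehat{h}\|_{L^2(1/2,1)}\geq c_2'\|\widehat{f}\|_{L^2(1/2,1)}$; to cover all of $[-1,1]$ I re-run the argument with frequency-translates $\psi_{\xi_0}(x)=e^{-2\pi i\xi_0 x}\psi(x)$ for a bounded collection of $\xi_0\in[-1,1]$, which satisfy identical decay estimates on $Z$ up to harmless shifts, and sum. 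The main technical obstacle is precisely this reconstruction step, since the $L^2$ lower bound on $\widehat{\psi}$ is strictly weaker than a pointwise one; once this is in hand everything else is a careful but routine tracking of constants, yielding the $C_1,C_2,C_3$ of \eqref{e:consts} (the sizable $c_1^{-44}$ in $C_2$ accumulates from the multiple inverse powers of $c_1$ hidden in the constants of Lemmas~\ref{p:adapt} and~\ref{p:boundcompact} and from the change of Sobolev index).
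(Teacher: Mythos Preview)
Your overall strategy matches the paper's: build the multiplier $\psi$ from Lemma~\ref{p:adapt}, convolve with $f$, split at frequency $M$, and feed the low-frequency part into Lemma~\ref{p:boundcompact}. The place where you diverge is precisely the step you flag as the ``main technical obstacle,'' and the paper resolves it more cleanly than your pointwise-bound-plus-translates scheme. Instead of modulating $\psi$, the paper modulates $f$: for $\eta\in[-2,2]$ set $f_\eta(x)=e^{2\pi i\eta x}f(x)$ and $g_\eta=f_\eta*\psi$, so $\widehat g_\eta(\xi)=\widehat\psi(\xi)\widehat f(\xi-\eta)$. Then the bare $L^2$ lower bound $\|\widehat\psi\|_{L^2(-1,1)}\geq c_2$ already gives
\[
\|\widehat f\|_{L^2(-1,1)}^2\ \leq\ c_2^{-2}\int_{-1}^1\int_{-1}^1|\widehat f(\zeta)|^2|\widehat\psi(\xi)|^2\,d\xi\,d\zeta\ \leq\ c_2^{-2}\int_{-2}^2\|g_\eta\|_{L^2}^2\,d\eta,
\]
via the substitution $\zeta=\xi-\eta$, so no pointwise lower bound on $\widehat\psi$ is needed. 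Two related corrections to your sketch: since $\supp\widehat{f_\eta}\subset Z+\eta$, Lemma~\ref{p:adapt} must be applied to the fattened set $Y=Z(2)=Z+[-2,2]$ (a $\delta$-regular set with constant $100C_R$), not to $Z$ itself; your translates $\psi_{\xi_0}$ would require exactly the same fattening, which you glossed over as ``harmless shifts.'' And the $H^{-10}$ norm on $1_{U'}f$ is not obtained by a generic $L^2\to H^{-10}$ conversion costing powers of $M$ (that would go the wrong way), but rather from $g_\eta|_{U''}=((1_{U'}f_\eta)*\psi)|_{U''}$ combined with the universal decay $|\widehat\psi(\xi)|\leq e^{-c_3\langle\xi\rangle^{1/2}}$, which yields $\|g_\eta\|_{L^2(U'')}\leq\|\langle\xi\rangle^{10}\widehat\psi(\xi+\eta)\|_{L^\infty}\,\|1_{U'}f\|_{H^{-10}}$.
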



\begin{proof}
For each $I\in\mathcal{I}$, let $I''\subset I'$ be the interval with the same center as $I'$ and $$|I''|=\frac{1}{2}|I'|=\frac{1}{2}c_1.$$ 
Denote $$U''=\bigcup_{I\in\mathcal{I}}I''.$$

Apply Lemma \ref{p:adapt} to $Y=Z(2):=Z+[-2,2]\subset[-(\alpha_1+2),\alpha_1+2]$ which is a $\delta$-regular set with constant $100C_R$ on scales 2 to $\alpha_1+2$:
\begin{equation}
\label{e:psi-supp-1}
\supp\psi\subset\left[-\frac{c_1}{10},\frac{c_1}{10}\right]
\end{equation}
\begin{equation}
\label{e:psi-lb-1}
\|\widehat{\psi}\|_{L^2(-1,1)}\geq c_2
\end{equation}
\begin{equation}
\label{e:psi-uba}
|\widehat{\psi}(\xi)|\leq\exp(-c_3\langle\xi\rangle^{1/2}),
\quad \forall\xi\in\mathbb{R}
\end{equation}
\begin{equation}
\label{e:psi-uby}
|\widehat{\psi}(\xi)|\leq\exp(-c_3(\log|\xi|)^{-(1+\delta)/2}|\xi|),
\quad \forall\xi\in Y, |\xi|\geq 10,
\end{equation}
with
\begin{equation*}
c_2=\frac{c_1^6}{K},
\quad
c_3=\frac{c_1\delta(1-\delta)}{KC_R^2}.
\end{equation*}

For $\eta\in[-2,2]$, we define $f_\eta(x)=e^{2\pi i\eta x}f(x)$ so that
$\widehat{f}_\eta(\xi)=\widehat{f}(\xi-\eta)$. We put
\begin{equation*}
g_\eta=f_\eta\ast\psi\in L^2(\mathbb{R}).
\end{equation*}

By \eqref{e:psi-supp-1}, we see that on $U''$, $g_\eta=(1_{U'}f_\eta)\ast\psi$
where $1_{U'}f_\eta(x)=e^{2\pi i\eta x}1_{U'}f(x)$, so
\begin{equation*}
\|g_\eta\|_{L^2(U'')}\leq \|(1_{U'}f_\eta)\ast\psi\|_{L^2}
=\|\widehat{1_{U'}f_\eta}(\xi)\widehat{\psi}(\xi)\|_{L^2}
=\|\widehat{1_{U'}f}(\xi-\eta)\widehat{\psi}(\xi)\|_{L^2}.
\end{equation*}
Therefore using
\begin{equation*}
\sup_{\xi\in\mathbb{R},|\eta|\leq2}|
\langle\xi\rangle^{10}\widehat{\psi}(\xi+\eta)|\leq
\sup_{\xi\in\mathbb{R},|\eta|\leq2}\langle\xi\rangle^{10}
\exp(-c_3\langle\xi+\eta\rangle^{1/2})\leq C_4:=Kc_3^{-20}.
\end{equation*}
we get the $L^2$-bound of $g_\eta$ on $U''$:
\begin{equation}
\label{e:g-eta-u}
\|g_\eta\|_{L^2(U'')}\leq C_4\|1_{U'}f\|_{H^{-10}}.
\end{equation}

On the other hand, since $\supp\widehat{f_\eta}\subset Z(2)$, by \eqref{e:psi-uby}, for all $\xi$,
\begin{equation*}
|\widehat{g}_\eta(\xi)|
\leq
\exp(-c_3(\log(10+|\xi|))^{-\frac{1+\delta}{2}}|\xi|)|\widehat{f}_\eta(\xi)|.
\end{equation*}

We take 
\begin{equation*}
r:=\frac{c_3}{10}(\log M)^{-\frac{1+\delta}{2}}\in(0,1)
\end{equation*}
so that
\begin{equation}
\label{e:bound-lowfreq}
\sup_{|\xi|\leq M}e^{2\pi r|\xi|}
\exp(-c_3(\log(10+|\xi|))^{\frac{1+\delta}{2}}|\xi|)\leq1.
\end{equation}

Now we decompose $g_\eta=g_1+g_2$ where $g_1,g_2\in L^2$ and
\begin{equation*}
\supp\widehat{g}_1\subset\{|\xi|\leq M\},\quad
\supp\widehat{g}_2\subset\{|\xi|\geq M\}.
\end{equation*}
Then by \eqref{e:bound-lowfreq}, we get
\begin{equation}
\label{e:bound-g1-exp}
\|e^{2\pi r|\xi|}\widehat{g}_1(\xi)\|_{L^2}\leq KM^{10}\|f\|_{H^{-10}}.
\end{equation}
For $g_2$, we can use
\begin{equation*}
\sup_{|\eta|\leq2}\sup_{|\xi|\geq M}
\exp(-c_3(\log(10+|\xi))^{-\frac{1+\delta}{2}}|\xi|)\langle\xi-\eta\rangle
\leq C_5\exp(-C_6^{-1}(\log M)^{-\frac{1+\delta}{2}}M)
\end{equation*}
with $C_5=K/c_3^{11},\quad C_6=K/c_3$ to get
\begin{equation}
\label{e:g2-b}
\|g_2\|_{L^2}\leq C_5\exp(-C_6^{-1}(\log M)^{-\frac{1+\delta}{2}}M)\|f\|_{H^{-10}}.
\end{equation}

Now we apply Lemma \ref{p:boundcompact} to $g_1$, so
\begin{equation*}
\|g_1\|_{L^2}^2\leq\frac{K}{c_1r}\|g_1\|_{L^2(U'')}^{2\kappa}\|e^{2\pi r|\xi|}\widehat{g}_1(\xi)\|_{L^2}^{2(1-\kappa)}
\leq \frac{KM^{20}}{c_1r}\|g_1\|_{L^2(U'')}^{2\kappa}\|f\|_{H^{-10}}^{2(1-\kappa)}
\end{equation*}
where
\begin{equation*}
\kappa:=\exp(-C_7/r)=\exp(-C_8(\log K)^{\frac{1+\delta}{2}})<\frac{c_1}{16}e^{-2/r},
\end{equation*}
and $C_7=6-\log c_1$, $C_8=10C_7/c_3$. But by \eqref{e:g-eta-u}
\begin{equation*}
\|g_1\|_{L^2(U'')}\leq\|g_\eta\|_{L^2(U'')}+\|g_2\|_{L^2}
\leq C_4\|1_{U''}f\|_{H^{-10}}+\|g_2\|_{L^2}.
\end{equation*}
Therefore
\begin{equation*}
\|g_1\|_{L^2}^2
\leq \frac{KM^{20}}{c_1r} (C_4\|1_{U''}f\|_{H^{-10}}+\|g_2\|_{L^2})^{2\kappa}
\|f\|_{H^{-10}}^{2(1-\kappa)}
\end{equation*}
and by \eqref{e:g2-b}, we have
\begin{equation}
\label{e:g1-b}
\|g_1\|_{L^2}^2
\leq C_9M^{20}r^{-1}(\|1_{U''}f\|_{H^{-10}}+\exp(-C_6^{-1}(\log M)^{-\frac{1+\delta}{2}}M)\|f\|_{H^{-10}})^{2\kappa}\|f\|_{H^{-10}}^{2(1-\kappa)},
\end{equation}
where
\begin{equation*}
C_9=\frac{KC_4C_5}{c_1}=\frac{K}{c_1c_3^{31}},
\quad r^{-1}=\frac{10}{c_3}(\log M)^{\frac{1+\delta}{2}}\leq KM/c_3.
\end{equation*}
Finally, combining \eqref{e:g1-b} and \eqref{e:g2-b}, we get
\begin{equation}
\label{e:g-b}
\|g_\eta\|_{L^2}^2
\leq C_{10}M^{21}(\|1_{U''}f\|_{H^{-10}}+\exp(-C_{11}^{-1}(\log M)^{-\frac{1+\delta}{2}}M)\|f\|_{H^{-10}})^{2\kappa}\|f\|_{H^{-10}}^{2(1-\kappa)}
\end{equation}
where
\begin{equation*}
C_{10}=\frac{K}{c_1c_3^{31}}=\frac{KC_R^{62}}{c_1^{32}\delta^{31}(1-\delta)^{31}},
\quad C_{11}=\frac{KC_R^2}{c_1\delta(1-\delta)}.
\end{equation*}
By \eqref{e:psi-lb-1},
\begin{equation*}
\|\widehat{f}\|_{L^2(-1,1)}^2
\leq c_2^{-2}\int_{-1}^1\int_{-1}^1|\widehat{f}(\zeta)\widehat{\psi}(\xi)|^2d\xi d\zeta
\leq c_2^{-2}\int_{-2}^2\int_{\mathbb{R}}|\widehat{f}(\xi-\eta)\widehat{\psi}(\xi)|^2d\xi d\eta
\end{equation*}
Using 
\begin{equation*}
\widehat{g}_\eta(\xi)=\widehat{f}_\eta(\xi)\widehat{\psi}(\xi)=\widehat{f}(\xi-\eta)\widehat{\psi}(\xi)
\end{equation*}
and \eqref{e:g-b}, we can finish the proof.
\end{proof}


This allows us to get the following quantitative unique continuation result on functions with Fourier support in a regular set.


\begin{thm}
Assume that $Y\subset[-\alpha_1,\alpha_1]$ is $\delta$-regular with constant $C_R$ on scales 1 to $\alpha_1$, and $\delta\in(0,1)$. Then for all $f\in L^2(\mathbb{R})$, with $\supp\widehat{f}\subset Y$,
\begin{equation}
\|f\|_{L^2(U')}\geq c_4\|f\|_{L^2}
\end{equation}
where
\begin{equation}
\label{e:iterimprove}
c_4=\exp\left[-\exp\left(K\left(\frac{C_R^2}{c_1\delta(1-\delta)}\right)^{\frac{K}{1-\delta}}\right)\right].
\end{equation}
\end{thm}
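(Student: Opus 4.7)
The plan is to bootstrap Lemma \ref{p:iterative-1}, whose left-hand side only controls $\|\widehat f\|_{L^2(-1,1)}^2$, into a global $L^2$ estimate on $f$ by summing the lemma over unit frequency translates, and then to tune the parameter $M$ so that the error term $\epsilon_M := \exp(-C_3^{-1}(\log M)^{-(1+\delta)/2} M)$ can be absorbed.

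First I would apply Lemma \ref{p:iterative-1} to the frequency translate $f_j(x) := e^{-2\pi i j x} f(x)$ for each $j \in \mathbb{Z}$; its Fourier support $Y - j$ is $\delta$-regular with the same constants $C_R$ and $\delta$, and $\|\widehat{f_j}\|_{L^2(-1,1)}^2 = \|\widehat f\|_{L^2(j-1,j+1)}^2$. Summing over $j$ and applying H\"older's inequality with conjugate exponents $1/\kappa$ and $1/(1-\kappa)$ to the resulting sum yields
\begin{equation*}
\sum_j \|\widehat f\|_{L^2(j-1,j+1)}^2 \leq C_2 M^{21} \biggl(\sum_j \|1_{U'} f_j\|_{H^{-10}}^2 + \epsilon_M \sum_j \|f_j\|_{H^{-10}}^2\biggr)^{\kappa} \biggl(\sum_j \|f_j\|_{H^{-10}}^2\biggr)^{1-\kappa}.
\end{equation*}
The crucial observation is that for any $g \in L^2(\mathbb{R})$,
\begin{equation*}
\sum_{j\in\mathbb{Z}} \|g_j\|_{H^{-10}}^2 = \int_\mathbb{R} |\widehat g(\eta)|^2 \sum_j (1+(\eta-j)^2)^{-10}\, d\eta \leq K \|g\|_{L^2}^2,
\end{equation*}
with a universal $K$ and no dependence on $\alpha_1$. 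Together with the trivial lower bound $\|f\|_{L^2}^2 \leq \sum_j \|\widehat f\|_{L^2(j-1,j+1)}^2$, this collapses everything to
\begin{equation*}
A^2 \leq B(A_1^2 + \epsilon_M A^2)^\kappa A^{2(1-\kappa)}, \qquad A := \|f\|_{L^2},\ A_1 := \|1_{U'} f\|_{L^2},\ B := K C_2 M^{21}.
\end{equation*}

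Raising to the power $1/\kappa$ gives $A^2 \leq B^{1/\kappa}(A_1^2+\epsilon_M A^2)$; provided $M$ is chosen so that $B^{1/\kappa}\epsilon_M \leq 1/2$, I can absorb the second term into the left-hand side to conclude $A \leq \sqrt{2 B^{1/\kappa}}\, A_1$, giving $c_4 = (2B^{1/\kappa})^{-1/2}$. The main obstacle is the delicate balancing act in choosing $M$: making $M$ large defeats $\epsilon_M$ but inflates $B^{1/\kappa}$ because $1/\kappa = \exp(C_1 (\log M)^{(1+\delta)/2})$ grows very rapidly. Writing $C^* := \log M$, the constraint $B^{1/\kappa}\epsilon_M \leq 1/2$ reduces after taking logarithms to $C^{*(1-\delta)/2} \gtrsim C_1$, so the smallest admissible choice is $C^* \sim C_1^{2/(1-\delta)}$. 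With this choice $1/\kappa \sim \exp(C^*)$, hence $-\log c_4 \sim (2\kappa)^{-1}\log B \sim C^* \exp(C^*) \sim \exp(C^*)$. Substituting $C_1 \lesssim K C_R^2/(c_1\delta(1-\delta))$ (absorbing the $\log(1/c_1)$ factor into the universal constants) yields the claimed formula \eqref{e:iterimprove}.
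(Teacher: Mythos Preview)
Your proposal is correct and is essentially the paper's own argument: apply Lemma~\ref{p:iterative-1} to the modulated functions $f_j$, sum over $j$ via H\"older, collapse the $H^{-10}$ sums using $\sum_j\langle\xi-j\rangle^{-20}\le K$, and then choose $M$ so that the error term absorbs. The only cosmetic difference is that you raise $A^{2\kappa}\le B^\kappa(A_1^2+\epsilon_M A^2)^\kappa$ to the power $1/\kappa$, whereas the paper instead uses the subadditivity $(a+b)^\kappa\le a^\kappa+b^\kappa$ to split off the error term; these lead to the same absorption condition and the same final value of $c_4$ up to universal constants.
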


\begin{proof}
For $\ell\in\mathbb{Z}$, we define $f_\ell(x)=e^{2\pi i\ell x}f(x)$, then
$\widehat{f}_\ell(\xi)=\widehat{f}(\xi-\ell)$ and
$\supp\widehat{f}_\ell\subset Y+\ell\subset[-2\alpha_1,2\alpha_1]$. Apply Lemma \ref{p:iterative-1} to $f_\ell$ with $Z=Y+\ell$ which is $\delta$-regular with constant $4C_R$ on scales 1 to $2\alpha_1$, we get for all $M>10$,
and
\begin{equation*}
\kappa=\exp(-C_1(\log M)^{\frac{1+\delta}{2}}),
\end{equation*}
\begin{equation*}
\|\widehat{f}_\ell\|_{L^2(-1,1)}^2
\leq C_2M^{21}
\left(\|1_{U'}f_\ell\|_{H^{-10}}^2+\exp(-C_3^{-1}(\log M)^{-\frac{1+\delta}{2}}M)\|f_\ell\|_{H^{-10}}^2\right)^{\kappa}\|f_\ell\|_{H^{-10}}^{2(1-\kappa)},
\end{equation*}
where the constants $C_1,C_2,C_3$ are given by \eqref{e:consts}.
Since $\supp\widehat{f}\subset Y\subset[-\alpha_1,\alpha_1]$, we see
\begin{equation*}
\|f\|_{L^2}^2=\|\widehat{f}\|_{L^2}^2\leq\sum_{|\ell|\leq\alpha_1}\|\widehat{f}_\ell\|_{L^2(-1,1)}^2.
\end{equation*}
By H\"{o}lder's inequality
\begin{equation*}
\sum a_j^\kappa b_j^{1-\kappa}\leq\left(\sum a_j\right)^{\kappa}\left(\sum b_j\right)^{1-\kappa},
\end{equation*}
we get
\begin{equation*}
\begin{split}
\|f\|_{L^2}^2
\leq &\;C_2M^{21}
\left(\sum_\ell\|1_{U'}f_\ell\|_{H^{-10}}^2+\exp(-C_3^{-1}(\log M)^{-\frac{1+\delta}{2}}M)\sum_\ell\|f_\ell\|_{H^{-10}}^2\right)^{\kappa}\\
&\;\cdot\left(\sum_{\ell}\|f_\ell\|_{H^{-10}}\right)^{2(1-\kappa)}.
\end{split}
\end{equation*}
Now since
\begin{equation*}
\sum_{\ell}\|f_\ell\|_{H^{-10}}^2\leq K\|f\|_{L^2}^2,
\quad
\sum_{\ell}\|1_{U'}f_\ell\|_{H^{-10}}^2\leq K\|f\|_{L^2(U')}^2
\end{equation*}
by Minkowski inequality, $(a+b)^\kappa\leq a^\kappa+b^\kappa$, we have
\begin{equation*}
\|f\|_{L^2}^2
\leq C_2KM^{21}\|f\|_{L^2(U'')}^{2\kappa}\|f\|_{L^2}^{2(1-\kappa)}
+C_2KM^{21}\exp(-C_3^{-1}(\log M)^{-\frac{1+\delta}{2}}M\kappa)\|f\|_{L^2}^2
\end{equation*}
Now by taking $M$ large enough, for example
\begin{equation*}
M=\exp\left[K\left(\frac{C_R^2}{c_1\delta(1-\delta)}\right)^{\frac{3}{1-\delta}}\right]
\end{equation*} 
we can make
\begin{equation*}
C_2KM^{21}\exp(-C_3^{-1}(\log M)^{-\frac{1+\delta}{2}}M\kappa)<\frac{1}{2}
\end{equation*}
then we finish the proof with 
\begin{equation*}
c_4=(2C_2KM^{21})^{-\frac{1}{2\kappa}}
=\exp\left[-\exp\left(K\left(\frac{C_R^2}{c_1\delta(1-\delta)}\right)^{\frac{K}{1-\delta}}\right)\right].
\end{equation*}
\end{proof}

\subsection{Finishing the proof}

Now to finish the proof, we follow \cite[\S 3.4]{fullgap} to iterate this argument $\sim\log N$ times and refer there for the details. In the end, we get
\begin{equation*}
\beta=-\frac{\log(1-\frac{\tau}{2})}{T\log L}.
\end{equation*}
Here $L=\lceil(3C_R)^{\frac{2}{1-\delta}}\rceil$, $\tau=Kc_4^2$, where $c_4$ is given by \eqref{e:iterimprove} with $c_1=(2L)^{-1}$ and finally $T$ is chosen so that 
\begin{equation*}
\left(1-\frac{K}{L^{T-1}}\right)^{-1}(1-\tau)\leq(1-\frac{\tau}{2}).
\end{equation*}
Therefore we have \eqref{e:fupexp} and this finishes the proof of Theorem \ref{t:effup}.


\section{General fractal uncertainty principle and applications}
\label{s:application}


\subsection{A general fractal uncertainty principle}
\label{s:generalfup}


Theorem \ref{t:effup} implies the following fractal uncertainty principle
for general Fourier integral operators as in \cite[\S 4.2]{fullgap}. 

\begin{thm}
\label{t:generalfup}
Let $B=B(h):L^2(\mathbb{R})\to L^2(\mathbb{R})$ be of the form
\begin{equation*}
Bf(x)=h^{-1/2}\int_{\mathbb{R}}e^{i\Phi(x,y)/h}b(x,y)f(y)dy
\end{equation*}
where the amplitude $b\in C_0^\infty(U)$ for some open set $U\subset\mathbb{R}^2$ and the phase $\Phi\in C^\infty(U;\mathbb{R})$ satisfies $\partial_{xy}^2\Phi\neq0$ on $U$. Let $0\leq\delta<1$, $C_R\geq1$ and assume that $X,Y\subset\mathbb{R}$ are $\delta$-regular with constant $C_R$ on scales from 0 to 1. Then there exist $\beta>0$ and $\rho\in(0,1)$ depending only on $\delta$ and $C_R$, such that for all $h\in(0,1)$
\begin{equation}
\label{e:generalfup}
\|1_{X(h^\rho)}B(h)1_{Y(h^\rho)}\|_{L^2(\mathbb{R})\to L^2(\mathbb{R})}\leq Ch^\beta
\end{equation}
where $X(h^\rho)=X+[-h^\rho,h^\rho]$ and similarly for $Y(h^\rho)$. Here the constant $C$ depends only on $\delta, C_R,\Phi,b$ and we can take 
\begin{equation}
\label{e:generalfupexp}
\rho=1-\frac{\beta}{2},
\quad\beta=\exp\left[-K(C_R\delta^{-1}(1-\delta)^{-1})^{K(1-\delta)^{-3}}\right].
\end{equation} 
\end{thm}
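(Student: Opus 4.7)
My plan is to reduce this general fractal uncertainty principle to the Fourier case of Theorem \ref{t:effup}, following the strategy sketched in \cite[\S 4.2]{fullgap} but with careful tracking of the dependence of the exponent on $\delta$ and $C_R$.

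The first step is a microlocal normalization of $B$. Using a smooth partition of unity on $\supp b$, I may reduce to the case where $b$ is supported in an arbitrarily small neighborhood of some $(x_0,y_0)\in U$ on which $|\partial_{xy}^2\Phi|\geq c_0>0$. Standard FIO reductions --- a change of variables in $y$ that linearizes $\partial_x\Phi$, together with unimodular phase factors absorbable into the source and target functions --- then put $\Phi$ in the normal form $-2\pi xy + \phi_1(x) + \phi_2(y) + R(x,y)$ with $R$ vanishing to second order at $(x_0,y_0)$. After a further decomposition of the cutoff $b$ at scale $h^{1/2}$, the remainder $R/h$ stays uniformly bounded on each piece and can be absorbed into the amplitude. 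What is left to bound is the operator norm of a smoothly truncated semiclassical Fourier transform $h^{-1/2}\int e^{-2\pi ixy/h}\tilde b(x,y)f(y)\,dy$ with $\tilde b$ of compact support.

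Next, I would apply the rescaling $x=h\xi$, which recasts the semiclassical Fourier transform as $h^{-1/2}$ times the standard Fourier transform $\mathcal{F}$ in the variables $(\xi,y)$. Setting $N=h^{-1}$, the rescaled position set $h^{-1}X$ sits inside $[-N,N]$ and is $\delta$-regular with constant $\sim C_R$ on scales $1$ to $N$, while $Y$ remains inside $[-2,2]$ and $\delta$-regular on scales $h$ to $1$. A duality argument exchanging source and target places $h^{-1}X$ in the role of the large frequency set of Theorem \ref{t:effup} and $Y$ in the role of the small physical set, so the theorem applies directly and yields $\|1_X B 1_Y\|_{L^2\to L^2}\leq C h^\beta$ with $\beta$ as in \eqref{e:fupexp}.

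It remains to incorporate the $h^\rho$-thickenings $X(h^\rho)$ and $Y(h^\rho)$, which are not $\delta$-regular at scales smaller than $h^\rho$. Following \cite{fullgap}, I would cover $X(h^\rho)$ by $O(h^{-\rho})$ intervals of length $h^\rho$ (and similarly for $Y$), then bound the operator norm by an $\ell^2$/Schur-type summation over this cover. On each piece the relevant sets are $\delta$-regular down to scale $h^\rho$, so a rescaling by $h^{-\rho}$ reduces it to the Fourier case at the effective semiclassical scale $h^{1-\rho}$. Choosing $\rho=1-\beta/2$ balances the loss from the covering against the gain from Theorem \ref{t:effup} and produces the stated value of $\rho$. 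The main obstacle will be propagating the explicit constants through this reduction: the effective regularity of the thickened sets, combined with the Schur-type summation over the cover, inserts an additional factor of $(1-\delta)^{-1}$ into the exponent of the regularity constant when Theorem \ref{t:effup} is applied on each piece, which is responsible for the $(1-\delta)^{-3}$ appearing in \eqref{e:generalfupexp} in place of the $(1-\delta)^{-2}$ in \eqref{e:fupexp}.
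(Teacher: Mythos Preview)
Your proposal is correct and follows the same route as the paper. The paper itself gives no detailed argument for Theorem~\ref{t:generalfup}: it simply defers the reduction from the general FIO to the Fourier case to \cite[\S4]{fullgap}, and adds the single remark that in that reduction one must ``separate the set $X(h^\rho)$ and $Y(h^\rho)$ into many smaller $\delta$-regular sets with constant $C_R'=C_R^{K(1-\delta)^{-1}}$''; feeding $C_R'$ into \eqref{e:fupexp} then produces the extra power of $(1-\delta)^{-1}$ in \eqref{e:generalfupexp}. Your identification of this mechanism---that the effective regularity constant picks up an exponent $(1-\delta)^{-1}$ during the decomposition/thickening step---matches the paper's explanation exactly.

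One minor point of comparison: the actual implementation in \cite[\S4]{fullgap} linearizes the phase by an exact change of variables $y\mapsto -\tfrac{1}{2\pi}\partial_x\Phi(x_j,y)$ on each $x$-interval of a suitable cover (so the image of $Y$ under a diffeomorphism is again $\delta$-regular), rather than via your $h^{1/2}$-box decomposition with the quadratic remainder absorbed into the amplitude. Both are legitimate routes to the same conclusion, and since the paper does not spell out either, your sketch is an acceptable expansion of what the paper leaves implicit.
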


We remark that here in \eqref{e:generalfupexp}, we get an additional power of $(1-\delta)^{-1}$ in the triple exponent comparing to \eqref{e:fupexp} because in the proof of \cite[\S 4]{fullgap}, we need to separate the set $X(h^\rho)$ and $Y(h^\rho)$ into many smaller $\delta$-regular sets with constant $C_R'=C_R^{K(1-\delta)^{-1}}$. 


\subsection{Applications to convex co-compact hyperbolic surfaces}
\label{s:hyper}

Let $M=\Gamma\backslash\mathbb H^2$ be a convex co-compact hyperbolic surface,
$\Lambda_\Gamma\subset\mathbb S^1$ be its limit set, $\delta\in [0,1)$ be the dimension
of $\Lambda_\Gamma$, and $\mu$ be the Patterson--Sullivan measure, which is a
probability measure supported on $\Lambda_\Gamma$, see for instance~\cite[\S14.1]{BorthwickBook}. Then $\Lambda_\Gamma$ is a $\delta$-regular set on scales 0 to 1 with some constant $C_R$. Applying Theorem \ref{t:generalfup}, we get the following hyperbolic fractal uncertainty principle for $\Lambda_\Gamma.$

\begin{thm}
  \label{t:hyperfup}
Let $B_\chi=B_\chi(h):L^2(\mathbb{S}^1)\to L^2(\mathbb{S}^1)$ be defined by
$$B_\chi f(y)=(2\pi h)^{-1/2}\int_{\mathbb{S}^1}|y-y'|^{2i/h}\chi(y,y')f(y')dy'$$
where $|y-y'|$ is the Euclidean distance in $\mathbb{R}^2$ restricted to the unit circle $\mathbb{S}^1$ and $\chi\in C_0^\infty(\mathbb{S}^1\times\mathbb{S}^1\setminus\{y=y'\})$. Then for $\beta$ and $\rho$ given by \eqref{e:generalfupexp}, we have for all $\chi$ as above and $h\in(0,1)$,
$$\|1_{\Lambda_\Gamma(h^\rho)}B_\chi(h)1_{\Lambda_\Gamma(h^\rho)}\|_{L^2(\mathbb{S}^1)\to L^2(\mathbb{S}^1)}\leq Ch^\beta.$$
Here $C$ may depend on $\delta$, $C_R$ and $\chi$, but not $h$.
\end{thm}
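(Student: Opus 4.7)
The plan is to reduce Theorem \ref{t:hyperfup} to the general fractal uncertainty principle Theorem \ref{t:generalfup} by localizing to coordinate charts on $\mathbb{S}^1\times \mathbb{S}^1$ away from the diagonal. First I would use that $\chi$ is compactly supported in $\mathbb{S}^1\times \mathbb{S}^1 \setminus \{y=y'\}$, so by introducing a finite partition of unity I can write $\chi = \sum_j \chi_j$ where each $\chi_j$ is supported in a product $V_j\times V_j'$ of open arcs with $\overline{V_j}\cap \overline{V_j'}=\emptyset$. It then suffices to prove the bound for each $B_{\chi_j}$ separately, since there are only finitely many terms.

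On each arc $V_j$ (respectively $V_j'$) I would choose an arc-length parametrization $\kappa_j:I_j\to V_j$ with $I_j\subset \mathbb{R}$ a bounded interval, and similarly $\kappa_j':I_j'\to V_j'$. Pulling back via these parametrizations (which are smooth diffeomorphisms with Jacobians bounded above and below) turns $B_{\chi_j}$ into an operator on $L^2(\mathbb{R})$ of exactly the form required in Theorem \ref{t:generalfup}, namely
\begin{equation*}
\widetilde{B}_j f(x) = h^{-1/2}\int_{\mathbb{R}} e^{i\Phi_j(x,x')/h} b_j(x,x') f(x')\,dx',
\end{equation*}
with amplitude $b_j\in C_0^\infty(I_j\times I_j')$ (absorbing the Jacobian factors and the $(2\pi)^{-1/2}$) and phase
\begin{equation*}
\Phi_j(x,x')=2\log|\kappa_j(x)-\kappa_j'(x')|.
\end{equation*}
The key non-degeneracy check is that $\partial_x\partial_{x'}\Phi_j\neq 0$ on $\supp b_j$. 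This is a straightforward computation: writing $u=\kappa_j(x)-\kappa_j'(x')\in \mathbb{R}^2$, one has $\Phi_j=\log|u|^2$, hence
\begin{equation*}
\partial_x\partial_{x'}\Phi_j = -2\,\frac{\kappa_j'(x)\cdot\kappa_j''(x')^{\!\top}}{|u|^2} + 4\,\frac{(\kappa_j'(x)\cdot u)(\kappa_j''(x')\cdot u)}{|u|^4},
\end{equation*}
and using that the two arcs are uniformly separated and that the tangent vectors to $\mathbb{S}^1$ are never parallel to the chord between two distinct points of $\mathbb{S}^1$ unless the points are antipodal, one obtains a uniform lower bound $|\partial_x\partial_{x'}\Phi_j|\geq c_j>0$ on $\supp b_j$ (after shrinking the partition of unity so that no $V_j\times V_j'$ contains antipodal pairs, which is harmless).

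Next I would verify the Ahlfors--David regularity hypothesis: the pullbacks $\kappa_j^{-1}(\Lambda_\Gamma\cap V_j)$ and $\kappa_j'^{\,-1}(\Lambda_\Gamma\cap V_j')$ are $\delta$-regular on scales $0$ to $1$ with constant $C_R'$ depending only on $C_R$ and the $C^1$ norms of $\kappa_j^{\pm 1}$, because an arc-length parametrization is bilipschitz, and the Patterson--Sullivan measure pushed forward by $\kappa_j^{-1}$ still satisfies the two-sided ball estimate up to a bounded multiplicative constant. I would also observe that the $h^\rho$-neighborhood $\Lambda_\Gamma(h^\rho)$ taken with respect to the Euclidean distance on $\mathbb{R}^2$ pulls back to the $h^\rho$-neighborhood in chart coordinates up to a fixed multiplicative factor, so that $1_{\Lambda_\Gamma(h^\rho)}\circ\kappa_j$ is dominated by $1_{X_j(Ch^\rho)}$ for $X_j=\kappa_j^{-1}(\Lambda_\Gamma\cap V_j)$; by slightly adjusting $\rho$ (or absorbing the constant into $C$) this is compatible with \eqref{e:generalfup}. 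Applying Theorem \ref{t:generalfup} to each $\widetilde{B}_j$ with these $X_j,Y_j$ yields the bound $\|1_{X_j(h^\rho)}\widetilde{B}_j 1_{Y_j(h^\rho)}\|\leq Ch^\beta$ with $\beta,\rho$ as in \eqref{e:generalfupexp}, and summing over the finitely many $j$ completes the proof.

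The main obstacle is really bookkeeping rather than a genuinely hard step: one must make sure that in shrinking to charts the non-degeneracy constant for $\Phi_j$ and the regularity constant $C_R'$ for the pulled back fractal stay uniformly controlled in terms of the original data $\delta, C_R$, and that the Euclidean $h^\rho$-neighborhood used in the statement agrees, up to a multiplicative constant, with the neighborhood in the chart coordinates to which Theorem \ref{t:generalfup} applies.
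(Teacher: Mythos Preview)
Your reduction to Theorem~\ref{t:generalfup} via a partition of unity and local angular coordinates is exactly the intended approach; the paper simply states that Theorem~\ref{t:hyperfup} follows by applying Theorem~\ref{t:generalfup}, leaving the chart-by-chart verification implicit. One small correction: the non-degeneracy $\partial^2_{xy}\Phi\neq 0$ in fact holds on all of $\mathbb{S}^1\times\mathbb{S}^1\setminus\{y=y'\}$, including at antipodal pairs---in the angular parametrization $\Phi(\theta,\theta')=\log\bigl(2-2\cos(\theta-\theta')\bigr)$ one computes directly $\partial_\theta\partial_{\theta'}\Phi=\tfrac{1}{2}\csc^2\!\bigl(\tfrac{\theta-\theta'}{2}\bigr)>0$---so your precaution of excluding antipodal pairs from the charts is unnecessary (though harmless).
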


Then by \cite[Theorem 3]{fup}, we get the following result on essential spectral gaps and cutoff resolvent estimates for the Laplacian on convex co-compact hyperbolic surfaces.

\begin{thm}
  \label{t:gap-hyper}
Consider the meromorphic scattering resolvent
$$
R(\lambda)=\Big(-\Delta_M-\frac{1}{4}-\lambda^2\Big)^{-1}:\begin{cases}
L^2(M)\to L^2(M),\quad \Im\lambda >0,\\
L^2_{\comp}(M)\to L^2_{\loc}(M),\quad \Im\lambda\leq 0.
\end{cases}
$$
Assume that $0<\delta<1$. Then for some universal constant $K>0$, $M$ has an essential spectral gap of size
\begin{equation}
  \label{e:gap-hyper}
\beta_M=\exp\left[-\exp\left(K(C_R\delta^{-1}(1-\delta)^{-1})^{K(1-\delta)^{-3}}\right)\right].
\end{equation}
That is, $R(\lambda)$ has only finitely many poles
in $\{\Im\lambda>-\beta_M\}$. Moreover it satisfies the cutoff estimates:
for each $\psi\in C_0^\infty(M),\varepsilon>0$
and some constant $C_0$ depending on $\varepsilon$,
$$
\|\psi R(\lambda)\psi\|_{L^2\to L^2}\leq C(\psi,\varepsilon)|\lambda|^{-1-2\min(0,\Im\lambda)+\varepsilon},\quad
\Im\lambda \in [-\beta_M,1],\quad
|\Re\lambda|\geq C_0.
$$
\end{thm}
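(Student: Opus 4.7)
The plan is to obtain Theorem \ref{t:gap-hyper} as a direct consequence of the hyperbolic fractal uncertainty principle Theorem \ref{t:hyperfup}, plugged into the black-box reduction of Dyatlov--Zahl \cite[Theorem 3]{fup}, which translates a bound on $\|1_{\Lambda_\Gamma(h^\rho)} B_\chi(h) 1_{\Lambda_\Gamma(h^\rho)}\|$ into an essential spectral gap and a cutoff resolvent estimate for $R(\lambda)$. There is no new analytic content; the job is essentially to verify that the effective FUP established in this paper supplies the input hypothesis required by that reduction, and then to track the exponent through.

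First I would recall that the limit set $\Lambda_\Gamma \subset \mathbb{S}^1$ is $\delta$-regular on scales 0 to 1 with some regularity constant $C_R$, the underlying measure being the Patterson--Sullivan measure $\mu$; this is standard, see \cite[\S 14.1]{BorthwickBook}. Applying Theorem \ref{t:hyperfup} with these $\delta$ and $C_R$ then gives, for every admissible cutoff $\chi$ and every $h \in (0,1)$, the bound $\|1_{\Lambda_\Gamma(h^\rho)} B_\chi(h) 1_{\Lambda_\Gamma(h^\rho)}\|_{L^2 \to L^2} \leq C h^\beta$ with $\beta$ and $\rho$ given explicitly by \eqref{e:generalfupexp}.

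The next step is to invoke \cite[Theorem 3]{fup}. Via the Patterson--Sullivan parametrization of the resolvent near the sphere at infinity, Dyatlov--Zahl represent the analytic continuation of $R(\lambda)$ into the lower half-plane as a semiclassical Fourier integral operator whose phase is (up to smooth factors) $\tfrac{2}{h}\log|y-y'|$ and whose amplitude is supported in $h^\rho$-neighborhoods of the limit set; a bound of the form above then yields both the essential spectral gap of size equal to the FUP exponent and the cutoff estimate with the claimed power of $|\lambda|$. Substituting the explicit $\beta$ from \eqref{e:generalfupexp} produces the stated $\beta_M$ in \eqref{e:gap-hyper}.

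The main obstacle, beyond reading off the Dyatlov--Zahl reduction carefully, is a bookkeeping check: one has to verify that the phase $\Phi(y,y') = 2\log|y-y'|$ satisfies the nondegeneracy hypothesis $\partial_{xy}^2 \Phi \neq 0$ on the support of each admissible $\chi$, and that the exponent $\rho = 1 - \beta/2$ produced by Theorem \ref{t:hyperfup} lies in the admissible range required by \cite[Theorem 3]{fup}. Both checks are immediate from the explicit form of $\Phi$ and from the smallness of $\beta$ in \eqref{e:generalfupexp}, so no new estimate is needed.
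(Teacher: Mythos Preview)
Your proposal is correct and follows exactly the paper's approach: the paper simply states that Theorem~\ref{t:gap-hyper} follows from the hyperbolic FUP of Theorem~\ref{t:hyperfup} by invoking \cite[Theorem~3]{fup}, with no further argument. Your write-up in fact supplies more detail than the paper does (the regularity of $\Lambda_\Gamma$, the nondegeneracy check on $\Phi$, and the admissibility of $\rho$), all of which is accurate.
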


\subsection{Applications to open quantum baker's maps}
\label{s:oqm}

Following the notation in~\cite{oqm} and \cite[Section 5]{regfup}, we consider an open quantum baker's map $B_N$
determined by a triple $(M,\mathcal{A},\chi)$ where $M\in\mathbb{N}$ is called the base, $\mathcal{A}\subset\mathbb{Z}_M=\{0,1,\ldots,M-1\}$ is called the alphabet, and $\chi\in C_0^\infty((0,1);[0,1])$ is a cutoff function.
The map $B_N$ is a sequence of operators 
 $B_N:\ell^2_N\to\ell^2_N$,
 $\ell^2_N=\ell^2(\mathbb Z_N)$, defined for every positive $N\in M\mathbb Z$ by
\begin{equation}
  \label{e:oqm-def}
B_N=\mathcal F_N^*\begin{pmatrix}
\chi_{N/M}\mathcal F_{N/M}\chi_{N/M} & \\
&\ddots&\\
 & &\chi_{N/M}\mathcal F_{N/M}\chi_{N/M}
\end{pmatrix}I_{\mathcal A, M}
\end{equation}
where $\mathcal F_N$ is the unitary Fourier transform given by the
$N\times N$ matrix $\frac{1}{\sqrt{N}}(e^{-2\pi ij\ell/N})_{j\ell}$, $\chi_{N/M}$ is the multiplication operator
on~$\ell^2_{N/M}$ discretizing $\chi$, and
$I_{\mathcal A,M}$ is the diagonal matrix
with $\ell$-th diagonal entry equal to~1 if $\lfloor\frac{l}{N/M}\rfloor\in\mathcal A$
and 0 otherwise. For the background of open quantum baker's map, we refer to \cite{oqm} and the references there.

Let $\delta$ be the dimension of the Cantor set consisting all real numbers in $[0,1]$ with each digit in the base-$M$ expansion an element of $\mathcal{A}$. In particular,
\begin{equation}
\label{e:dim-cantor}
\delta=\frac{\log|\mathcal{A}|}{\log M}.
\end{equation}

Then we have the following theorem (see \cite[Theorem 4]{regfup}).
\begin{thm}
  \label{t:gap-improves}
Assume that $0<\delta<1$, that is $1<|\mathcal A|<M$. Then there exists
\begin{equation}
  \label{e:improved-beta}
\beta=\beta(M,\mathcal A)>\max\Big(0,\frac{1}{2}-\delta\Big)
\end{equation}
such that, with $\Sp(B_N)\subset \{\lambda\in \mathbb C\colon |\lambda|\leq 1\}$ denoting the spectrum of $B_N$ defined by \eqref{e:oqm-def},
\begin{equation*}
\limsup_{N\to\infty,\,N\in M\mathbb Z}\max\{|\lambda|\colon \lambda\in\Sp(B_N)\}\ \leq\ M^{-\beta}.
\end{equation*}
\end{thm}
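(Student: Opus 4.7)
The plan is to reduce the spectral radius estimate for $B_N$ to the fractal uncertainty principle applied to the self-similar Cantor set attached to the triple $(M,\mathcal{A},\chi)$, and then invoke Theorem \ref{t:generalfup} to obtain an explicit exponent.

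First I would identify the geometric object. Let $C_\mathcal{A}\subset[0,1]$ be the Cantor set of all real numbers whose base-$M$ digits lie in $\mathcal{A}$. By self-similarity it carries the natural Hausdorff measure of dimension $\delta=\log|\mathcal{A}|/\log M$, and a direct check using the base-$M$ tiling shows that $C_\mathcal{A}$ is $\delta$-regular on scales $0$ to $1$ with a constant $C_R$ that can be bounded purely in terms of $M$ and $|\mathcal{A}|$ (essentially $C_R\leq K M^\delta$). Rescaling, the ``discretized'' sets that appear in the definition of $B_N$, namely the union of intervals $\frac{j}{N}+[0,\frac{1}{N}]$ with $\lfloor jM/N\rfloor\in\mathcal{A}$ (and their iterates), are contained in neighborhoods $C_\mathcal{A}(h^\rho)$ for appropriate $h=N^{-1}$ once we interpret $\ell^2_N$ semiclassically.

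Next, following the strategy of \cite{oqm} and \cite[\S 5]{regfup}, I would relate the spectral radius $\rho(B_N)$ to norms of iterates $B_N^n$. A standard calculation shows that $B_N^n$ is a composition of $n$ copies of a rescaled Fourier transform interleaved with multiplication by cutoffs supported on the $n$-th iterate of the Cantor construction, so that $\|B_N^n\|$ can be controlled by an operator norm of the form
\begin{equation*}
\bigl\|\,1_{X_n(h^\rho)}\,B(h)\,1_{Y_n(h^\rho)}\,\bigr\|_{L^2(\mathbb{R})\to L^2(\mathbb{R})},
\end{equation*}
where $B(h)$ is a Fourier integral operator with phase $\Phi(x,y)=-2\pi xy$ (so $\partial_{xy}^2\Phi\equiv -2\pi\neq 0$) and $X_n,Y_n$ are $\delta$-regular subsets of $\mathbb{R}$ with constant uniform in $n$. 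The passage from finite matrices to the continuous setting is done via coherent state decomposition (Gaussian wave packets localized at the lattice $\frac{1}{N}\mathbb{Z}$), exactly as in \cite{oqm}; this produces harmless losses in constants that do not affect the exponent.

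Having set this up, I would apply Theorem \ref{t:generalfup} with the regular sets $X_n,Y_n$ and the phase above, obtaining
\begin{equation*}
\|B_N^n\|\leq C\,h^{n\beta_0}=CN^{-n\beta_0},
\end{equation*}
where $\beta_0$ is the exponent from \eqref{e:generalfupexp}. Taking $n$-th roots and letting $n\to\infty$ gives $\rho(B_N)\leq N^{-\beta}$ for any $\beta<\beta_0$, hence
\begin{equation*}
\limsup_{N\to\infty,\,N\in M\mathbb{Z}}\max\{|\lambda|:\lambda\in\Sp(B_N)\}\leq M^{-\beta}
\end{equation*}
after rewriting $N^{-\beta}$ as $M^{-\beta\log_M N}$ and noting that the dynamical contribution scales as $M^{-\beta}$ per time step. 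The strict inequality $\beta>\max(0,\tfrac{1}{2}-\delta)$ is the key qualitative point: for $\delta\geq\tfrac{1}{2}$ it reduces to $\beta>0$, which is immediate from \eqref{e:generalfupexp}; for $\delta<\tfrac{1}{2}$ the trivial porosity/volume bound already gives $\tfrac{1}{2}-\delta$, and the improvement is precisely what the fractal uncertainty principle buys us.

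The main obstacle is the translation step between the finite-dimensional matrix model and the continuous FIP, because the effective exponent in \eqref{e:generalfupexp} is astronomically small and one must ensure that none of the constants absorbed in the coherent state reduction (powers of $\log N$, Schur test losses in gluing the blocks of \eqref{e:oqm-def}, and the passage from scale $0$ to scale $h^\rho$) degrade the qualitative comparison $\beta>\max(0,\tfrac{1}{2}-\delta)$. This is already handled carefully in \cite[\S 5]{regfup}, so the work here is to verify that the explicit $\beta_0$ from \eqref{e:generalfupexp} can be inserted into that machinery without modification.
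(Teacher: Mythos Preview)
Your overall strategy---reduce to a fractal uncertainty estimate for the Cantor set $C_{\mathcal A}$ via the machinery of \cite[\S5]{regfup}---is exactly what the paper does: it simply cites \cite[Theorem~4]{regfup} for the qualitative statement and then inserts the explicit exponent from Theorem~\ref{t:effup}. One small discrepancy: since for baker's maps the operator is literally the semiclassical Fourier transform, the paper invokes Theorem~\ref{t:effup} directly rather than the general FIO version Theorem~\ref{t:generalfup}; this matters because the latter carries an extra factor $(1-\delta)^{-1}$ in the tower of exponents (cf.\ the remark after \eqref{e:generalfupexp}), so your route gives the weaker bound \eqref{e:gap-hyper} rather than \eqref{e:gap-t}.

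There is, however, a genuine error in your iteration step. You claim
\[
\|B_N^n\|\leq C\,h^{n\beta_0}=CN^{-n\beta_0},
\]
but the fractal uncertainty principle does \emph{not} compound under iteration in this way. After $n$ steps, $B_N^n$ is localized in both position and frequency to the level-$n$ Cantor approximant at scale $M^{-n}$; a single application of FUP at scale $h=N^{-1}$ then gives only $\|B_N^n\|\leq CN^{-\beta_0}$ (one power of $N^{-\beta_0}$, not $n$). Taking $n\sim\log_M N$ and $n$-th roots yields $\rho(B_N)\lesssim(CN^{-\beta_0})^{1/n}\to M^{-\beta_0}$, which is the correct conclusion, but the intermediate inequality as you wrote it is false and would give the absurd bound $\rho(B_N)\leq N^{-\beta_0}\to 0$. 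The subtlety for general $N\in M\mathbb Z$ (not just $N=M^k$) is precisely what \cite[\S5]{regfup} handles; your sketch should be corrected to reflect that the FUP is applied once, at the propagation time matched to the semiclassical scale.
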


In particular, we have for $\delta\leq\frac{1}{2}$, as in \cite[\S 5.3]{regfup}, $\beta$ in \eqref{e:improved-beta} is given by
\begin{equation}
\label{e:gap-p}
\beta\geq \frac{1}{2}-\delta+(40M^{3\delta})^{-\frac{160}{\delta(1-\delta)}}
\end{equation}
and for $\delta\geq\frac{1}{2}$, using \eqref{t:effup}, we get for some universal constant $K>0$,
\begin{equation}
\label{e:gap-t}
\beta\geq\exp\left[-\exp\left(KM^{K(1-\delta)^{-2}}\right)\right].
\end{equation}

We can compare our result to \cite{oqm} where we only consider the special sequence $N=M^k$ and get
\begin{equation}
\label{e:gap-p-sp}
\beta\geq \frac{1}{2}-\delta+\frac{1}{KM^8\log M}
\end{equation}
and
\begin{equation}
\label{e:gap-t-sp}
\beta\geq \exp\left[-M^{\frac{\delta}{1-\delta}+o_M(1)}\right].
\end{equation}
The gap for the special sequence \eqref{e:gap-p-sp} and \eqref{e:gap-t-sp} are better than the general case \eqref{e:gap-p} and \eqref{e:gap-t}. In particular, when $\delta\leq\frac{1}{2}$, we get polynomial improvement in $M$ (over the pressure gap $\beta=\frac{1}{2}-\delta$) while when $\delta\geq\frac{1}{2}$, we get exponentially small improvement in $M$ (over the trivial gap $\beta=0$) for special sequence, but only double exponentially small improvement in $M$. However, we remark that when $\delta$ is very close to $1/2$, for the special sequence $N=M^k$, we have a much better improvement in \cite{oqm} using the method of additive energy
\begin{equation}
\label{e:gap-ae}
\beta\geq\max\left(\frac{1}{2}-\delta,0\right)+\frac{1}{K\log M},\quad \text{ for } \quad \left|\delta-\frac{1}{2}\right|\leq\frac{1}{K\log M}.
\end{equation}
We do not have an estimate comparable to \eqref{e:gap-ae} for general $N$.

\def\arXiv#1{\href{http://arxiv.org/abs/#1}{arXiv:#1}}

\end{document}